\newtheoremstyle{plainNoItalics}{}{}{\normalfont}{}{\bfseries}{.}{ }{}
\theoremstyle{plain}
\newtheorem{thm}{Theorem}[section]
\newtheorem{cor}[thm]{Corollary}
\newtheorem{lem}[thm]{Lemma}
\newtheorem{defn}[thm]{Definition}
\newtheorem{prop}[thm]{Proposition}
\newcommand{\f}{\frac}
\newcommand{\beq}{\begin{equation}}
\newcommand{\eeq}{\end{equation}}
\newcommand{\beqa}{\begin{eqnarray}}
\newcommand{\eeqa}{\end{eqnarray}}
\newcommand{\bit}{\begin{itemize}}
\newcommand{\eit}{\end{itemize}}
\newcommand{\bedef}{\begin{defn}}
\newcommand{\edefn}{\end{defn}}
\newcommand{\bpro}{\begin{prop}}
\newcommand{\epro}{\end{prop}}
\newcommand{\RR}{\mathbb{R}}
\newcommand{\TT}{\mathbb{T}}
\newcommand{\NN}{\mathbb{N}}
\DeclareMathOperator\dD{d}
\newcommand{\mA}{\mathcal A}
\newcommand{\mB}{\mathcal B}
\newcommand{\mC}{\mathcal C}
\newcommand{\mF}{\mathcal{F}}
\newcommand{\mI}{\mathcal I}
\newcommand{\mJ}{\mathcal J}
\newcommand{\mP}{{\mathcal P}}
\newcommand{\mS}{\mathcal S}
\newcommand{\mT}{\mathcal T}
\newcommand{\xL}{{x_{j-{1}/{2}}}}
\newcommand{\xR}{{x_{j+{1}/{2}}}}
\newcommand{\iR}{{j+{1}/{2}}}
\newcommand{\testR}{{\varphi}}   
\newcommand{\pf}{\partial f}
\newcommand{\pt}{\partial t}
\newcommand{\px}{\partial x}
\newcommand{\pv}{\partial v}
\newcommand\ds{ \displaystyle }
\newcommand{\dwdx}{\dD\mu_t}
\newcommand{\dw}{\omega(t)\dD v}
\newcommand{\PP}{\mathscr P}
\newcommand{\HH}{H}
\newcommand{\CC}{\mathscr C}
\email{francis.filbet@math.univ-toulouse.fr}
\email{marianne.bessemoulin@univ-nantes.fr}
\title[Discontinuous Galerkin/Hermite Spectral methods for Vlasov-Poisson]{On the convergence of discontinuous Galerkin/Hermite spectral methods for the  Vlasov-Poisson system} 
\author{Marianne Bessemoulin-Chatard and Francis Filbet}
\keywords{Convergence; Discontinuous Galerkin method; Hermite spectral method; Vlasov-Poisson}
\subjclass[2010]{Primary: 65N35, 
  82C40, 
   Secondary: 65N08
}
\begin{document}

\maketitle

\begin{abstract}
We prove the convergence  of  discontinuous Galerkin approximations
for the Vlasov-Poisson system written as an hyperbolic system using
Hermite polynomials in velocity. To obtain stability properties,
we introduce a suitable weighted $L^2$  space, with a time dependent weight,
and first prove global stability for the weighted $L^2$ norm and
propagation of regularity. Then we prove error estimates between the
numerical solution and the smooth solution to the Vlasov-Poisson system.
\end{abstract}

\tableofcontents

\section{Introduction}
\setcounter{equation}{0}
\label{sec:1}
We consider a noncollisional plasma of charged particles (electrons
and ions). For simplicity, we assume that the properties of the plasma
are one dimensional and   we take into account only the electrostatic
forces, thus neglecting the electromagnetic effects. We denote by
$f=f(t,x,v)$ the electron distribution function and by
$E(t,x)$  the electrostatic field. The
Vlasov-Poisson equations of the plasma in dimensionless variables can
be rewritten as,
\beq
\label{vlasov} 
\left\{
\begin{array}{l}
\ds\f{\pf}{\pt}\,+\, v\,\f{\pf}{\px} \,+\,E\,\f{\pf}{\pv} \,\,=\,\, 0\,,
\\[1.1em]
  \ds\f{\partial E}{\px}=\rho- \rho_0\,,
  \\[1.1em]
  \ds f(t=0) = f_0\,, 
\end{array}\right.
\eeq
where the density $\rho$ is given by
$$
\rho(t,x)\,=\,\int_\RR f(t,x,v)\,\dD v\,, \quad t\geq 0, \, x\in \TT.
$$
To ensure the well-posedness of the Poisson problem, we add the compatibility (or normalizing) condition
\begin{equation}\label{bc00}
\int_\TT \rho(t,x) \dD x =\int_\TT\int_{\RR} f(t,x,v)\dD v \dD x
\,=\,{\rm mes}(\TT)\,\rho_0,
\quad \forall \,\, t\geq 0,
\end{equation}
which is the condition for total charge neutrality. Let us notice that \eqref{bc00} express that the total charge of the system is preserved in time.


There is a wide variety of techniques  to discretize  the
Vlasov-Poisson system. For instance, particle methods (PIC) consist
in approximating the distribution function by a finite number of Dirac masses~\cite{Birdsall1985}. They allow to obtain satisfying results with a
\textcolor{black}{small} number of discrete particles, hence these methods are very popular
in the community of computational plasma physics, but a well-known drawback of this approach is the inherent numerical noise which only decreases in
$1/\sqrt{N}$ when the number of discrete particles $N$ increases, preventing
from getting an accurate description of the distribution function for
some specific applications. To overcome this difficulty, Eulerian
solvers have been applied. These methods discretize the Vlasov equation on a mesh of
the phase space \cite{Filbet2003,duclous,sonnen,filbet2006}. Among them, we can
mention finite volume methods \cite{Filbet2001} which are a simple and
inexpensive option, but in general low order. Fourier-Fourier
transform schemes \cite{Klimas1994} are based on a Fast Fourier
Transform of the distribution function in phase space, but suffer from
Gibbs \textcolor{black}{phenomenon} if other than periodic conditions are
considered. Standard finite element methods \cite{Zaki1988a,Zaki1988b}
have also been applied, but may present numerical oscillations when
approximating the Vlasov equation. Later, semi-Lagrangian schemes have
also been proposed \cite{Sonnendrucker1999}, consisting in computing
the distribution function at each grid point by following the
characteristic curves backward. Despite these schemes can achieve high
order allowing also for large time steps, they require high order
interpolation to compute the origin of the characteristics, destroying
the local character of the reconstruction. Many improvement have been
proposed  and studied to make this approach more efficient \cite{besse,
  charles,chang}. Finally, spectral Galerkin and spectral collocation methods for the asymmetric
weighted Fourier-Hermite discretization have been proposed in \cite{engelmann1963, LeBourdiec2006, manzini2016}. In
\cite{Camporeale2016}, \textcolor{black}{the} authors study a time implicit method allowing \textcolor{black}{the} exact conservation of charge, momentum and energy, and highlight that for some test cases, this scheme can be significantly more accurate than the PIC method. 

In the present article, we focus on a class of Eulerian methods based
on Hermite polynomials in the velocity variable, where the
Vlasov-Poisson system \eqref{vlasov}  is written as an hyperbolic
system. This idea of using Galerkin methods with a small finite set of
orthogonal polynomials rather than discretizing the distribution
function in velocity space goes back to the 60's \cite{Armstrong1967,
  Joyce1971}. More recently, the merit to use rescaled orthogonal
basis like the so-called scaled Hermite basis has been shown
\cite{engelmann1963, Holloway1996,holloway2,Schumer1998,Tang1993}. In
\cite{Holloway1996}, Holloway formalized two possible approaches. The
first one, called symmetrically-weighted, is based on standard Hermite
functions as the basis in velocity and as test functions in the
Galerkin method. It appears that this symmetrically weighted method
cannot simultaneously conserve \textcolor{black}{the} mass and \textcolor{black}{the} momentum. It makes up for this
deficiency by correctly conserving the $L^2$ norm of the distribution
function, ensuring the stability of the method. In the second
approach, called asymmetrically-weighted, another set of test
functions is used, leading to the simultaneous conservation of mass,
momentum and total energy since the infinite hyperbolic system corresponds to
the one satisfied by the moments of the distribution in the velocity
space. However, this approach  does not conserve the $L^2$ norm of the
distribution function and is then not numerically stable. Recently in
\cite{BCF2021}, we provide a stability analysis of the asymmetric
Hermite method in a weighted $L^2$ space for the Vlasov-Poisson
system. The main idea is to introduce a scaling function $t\mapsto
\alpha(t)$ which is well adapted to the variation of the distribution
function with respect to time.  The aim of this work is to present a
convergence analysis with error estimates  based on the asymmetric
weighted Hermite method with a discontinuous Galerkin method for the
space discretization. It is worth to mention that the convergence of \textcolor{black}{the} symmetric weighted
Fourier-Hermite method has been already studied in \cite{Manzini2017}
where the standard $L^2$ framework is well adapted. In
\cite{Kormann2021}, \textcolor{black}{the} authors study \textcolor{black}{the} conservation and $L^2$ stability
properties of a generalized Hermite-Fourier semi-discretization,
including as special cases the symmetric  and asymmetric weighted
approaches. Concerning discontinuous Galerkin methods, they are
similar to finite elements methods but use discontinuous polynomials
and are particularly well-adapted to handling complicated boundaries
which may arise in many realistic applications. Due to their local
construction, this type of methods provides good local conservation
properties without sacrificing the order of accuracy. They were
already used for the Vlasov-Poisson system in
\cite{Heath2012,Cheng2013}. Optimal error estimates and study of the
conservation properties of a family of semi-discrete DG schemes for
the Vlasov-Poisson system with periodic boundary conditions have been
proved for the one \cite{Ayuso2011} and multi-dimensional
\cite{Ayuso2012} cases. In all these works, the discontinuous Galerkin method is employed using a phase space mesh.

Here, we adopt this approach only in physical space, as in
\cite{Filbet2020}, with a Hermite approximation in the velocity
variable. In \cite{Filbet2020}, such schemes with discontinuous
Galerkin spatial discretization are designed in such a way that \textcolor{black}{the}
conservation of mass, momentum and total energy is rigorously
provable. In the next section, we introduce the formulation of the
Vlasov equation using the Hermite basis in velocity and a class of
spatial discretizations based on discontinuous Galerkin
approximations. Then we present our main result on error estimates
(Theorem \ref{th:main}). In Section \ref{sec:3}, we prove some
preliminary results on approximation theory based on Spectral accuracy
of Hermite spectral methods and remind some basic results on
interpolation error for discontinuous Galerkin method. Then in Section
\ref{sec:4}, we prove an error estimate between the semi-discrete
solution (time is continuous) and the exact smooth solution of the
Vlasov-Poisson system. Finally in
Section \ref{sec:5} we present numerical results in order to
illustrate the order of convergence and the stability of our approach.

%
%

\section{Discontinuous Galerkin/Hermite spectral methods}
\setcounter{equation}{0}
\label{sec:2}
In this section, we present the Discontinuous Galerkin/Hermite
spectral method. On the one hand, we focus on the velocity
discretization by expanding the distribution function $f$ using
Hermite polynomials. Then, we treat the space discretization using a
discontinuous Galerkin method \cite{Filbet2020,BCF2021}.

\subsection{Hermite spectral form}
For a given scaling positive function $t\mapsto \alpha(t)$ which will be determined
later, we define the weight as
\begin{equation}
  \label{eq:def_omega}
\omega(t,v)\;:=\,\sqrt{2\pi} \,\exp\left(\frac{\alpha^2(t)\,|v|^2}{2}\right),
\end{equation}
 and the associated weighted  $L^2$ space 
\begin{equation*}
L^2(\dw):=\left\{g : \RR\to\RR : \int_{\RR} |g(v)|^2\,\omega(t,v)\dD v <+\infty\right\},
\end{equation*}
with $\langle\cdot,\cdot\rangle_{L^2(\dw)}$ the inner
product and $\|\cdot\|_{L^2(\dw)}$ the corresponding norm. As in \cite{BCF2021}, we choose the following basis of normalized scaled time dependent asymmetrically weighted Hermite functions:
\beq
\label{hbasisf}
\Psi_n(t,v)\,=\,\alpha(t)\,{H}_n\left(\alpha(t)v\right)\,\f{e^{-(\alpha(t)v)^2/2}}{\sqrt{2\pi}}\,,
\eeq
where $\alpha$ is a scaling function depending on time and  $H_n$ are the Hermite polynomials defined by ${H}_{-1}(\xi)=0$, ${H}_0(\xi)=1$ and for $n\geq
1$, ${H}_n(\xi)$ has the following recursive relation 
$$
\sqrt{n}\,{H}_n(\xi) \,=\, \xi \,{H}_{n-1}(\xi)-\sqrt{n-1}\,{H}_{n-2}(\xi)\,, \quad \forall \,n \geq 1\,.
$$
Let us also emphasize that
$H_n^\prime(\xi)\,=\,\sqrt{n}\,H_{n-1}(\xi)$ for all $n\geq 1$, and the set of functions $(\Psi_n)_n$ defined by \eqref{hbasisf} is an orthogonal system satisfying
\begin{equation}
  \label{proportho}
\left\langle\Psi_n,\Psi_m\right\rangle_{L^2(\dw)}\,=\,\alpha(t)\,\int_\RR \Psi_{n}(v)\,
  H_m(\alpha(t)\, v) \dD v \,=\,\alpha(t)\,\delta_{n,m},
\end{equation}
where $\delta_{n,m}$ is the Kronecker delta function. 
Finally, for any integer $N\geq 1$ and $t\geq 0$, we introduce the
space $V_N$  as the subspace
of $L^2(\dw)$ defined by
\begin{equation}
  \label{def_VN}
V_{N} \,:=\,\text{Span}\{\Psi_n(t),\quad 0 \leq
n\leq N-1 \}.
\end{equation}
Then we look for an approximate solution $f_N$ of \eqref{vlasov} as a finite sum which corresponds to a truncation of a
series
\beq
\label{fseries}
f_{N}(t,x,v)=\sum_{n=0}^{N-1}C_n(t,x)\,\Psi_n(t,v)\,,
\eeq
where $N$ is the number of modes and $(C_n)_{0\leq n\leq N-1}$ are  computed  using \textcolor{black}{the} orthogonality property \eqref{proportho}, and taking
$H_n(\alpha\,v)$ as test function in \eqref{vlasov}. Therefore,  a system of
evolution equations is obtained for the modes $(C_n)_{0\leq n< N }$ as
in \cite{BCF2021},
\begin{equation}
  \label{cn}
  \left\{
    \begin{array}{l}
\ds\partial_t
      C_n \,+\, \mT_n[C] \,=\, \mS_n[C,E_N] \,,
      \\[1.1em]
  \ds    \mT_n[C] \,=\,  \frac{1}{\alpha}\left(\sqrt{n}\,\partial_x
      C_{n-1}\,+\,\sqrt{n+1}\,\partial_xC_{n+1}\right)\,,
      \\[1.1em]
      \ds\mS_n[C,E_N] \,=\,  \frac{\alpha'}{\alpha}\,\left(n\,C_n+\sqrt{(n-1)n}\,C_{n-2}\right)\,+\,E_{N}\,\alpha\,\sqrt{n}\,C_{n-1}\,,
    \end{array}
  \right.
    \end{equation}
    with  $C_{n}=0$ when $n<0$ and $n\geq N$, and the initial data
    $C_n(t=0)$ is given by
    $$
C_n(t=0) \,=\, \frac{1}{\alpha(0)}\,\left\langle f_0,
  \,\Psi_n(0)\right\rangle_{L^2(\omega(0)\dD v)}\,.
    $$
    Meanwhile, we observe that the density $\rho_{N}$ satisfies
$$
\rho_{N} \,=\, \int_\RR f_{N} \,\dD v \,=\, C_0\,,
$$
and then the Poisson equation becomes 
\beq
\label{ps}
\f{\partial E_{N}}{\px} = C_0 - \rho_{0,N}\,,
\eeq
with $\rho_{0,N}$ such that
$$
\int_{\TT} \left(C_{0} - \rho_{0,N}\right) \,\dD x = 0\,. 
$$
Observe that when we take $N=\infty$ in the expression
\eqref{fseries}, we get an infinite system \eqref{cn}-\eqref{ps} of equations for $(C_n)_{n\in\NN}$ and $E_{N}$, which is formally equivalent to the Vlasov-Poisson  system \eqref{vlasov}.



\subsection{Spatial discretization}

As in \cite{BCF2021}, we consider a discontinuous Galerkin
approximation for the Vlasov equation with Hermite spectral basis in
velocity \eqref{cn}. Let us first introduce some notations and start
with $\mJ=\{0,\ldots, N_x-1\}$ describing  the set  of subintervals and
$\hat\mJ=\{0,\ldots,N_x\}$ related to the number of edges, where $N_x\geq 1$ is an integer, then we
consider  the set $\{\xR\}_{j\in\hat\mJ}$, a partition of the torus $\TT$, where each element is denoted as $I_j=[\xL,
\xR]$ with its length $h_j$  for $j\in\mJ$, and $h=\max_j h_j$. Finally, we introduce
the parameter $\delta=(h,1/N)$ related to the numerical discretization
in space and velocity.

Given any $k\in\mathbb{N}$, we define a finite dimensional discrete piecewise polynomial space
\begin{equation}
  \label{def:Xh}
X_h\,=\,\left\{u\in L^2(\TT):\, u|_{I_j}\in \PP_k(I_j), \quad j\in\mJ\right\}\,,
\end{equation}
where the local space $\PP_k(I)$ consists of polynomials of degree at
most $k$ on the interval $I$. We further denote the jump $[u]_\iR$
and the average $\{u\}_\iR$ of $u$ at $x_\iR$ defined as
$$
[u]_\iR\,=\,{u(x_\iR^+)\,-\,u(x_\iR^-)} \quad{\rm and}\quad
\{u\}_\iR\,=\,\frac12\,\left(u(x_\iR^+)\,+\,u(x_\iR^-)\right)\,, \quad \forall\, j\in\hat\mJ\,,
$$
where $u(x^\pm)=\lim_{\Delta x\rightarrow 0^\pm} u(x+\Delta x)$.  We
also denote

\[  u_\iR=u(x_\iR)\,, \qquad u^\pm_\iR=u(x^\pm_\iR)\,, \quad \forall\, j\in\hat\mJ\,. \]

From these notations, we apply a semi-discrete discontinuous Galerkin
method for \eqref{cn} as follows. We look for an approximation
$C_{\delta}=(C_{\delta,n})_{0\leq n \leq N-1}$ with $C_{\delta,n}(t,\cdot) \in X_h$, such that for any $\varphi_n \in X_h$, we have
\beq
  \frac{\dD}{\dD t}\int_{I_j}C_{\delta,n}\,\varphi_n\,\dD
  x\,+\,\mA_{n,j}(g_n(C_\delta),\varphi_n) \,=\, \int_{I_j}
     \mS_n[C_\delta,E_\delta]\,\varphi_n \,\dD x,\quad{\rm for }\quad j\in\mJ, \quad 0\leq n \leq N-1,\,
     \label{dgcn}
\eeq
where $\mA_{n,j}$ is defined by 
\beq
\label{anh}
\left\{
  \begin{array}{l}
\ds \mA_{n,j}(g_{n},\varphi_n) \,=\, -\int_{I_j}g_{n}\,\testR^\prime_n \,
    \dD
    x\,+\,\hat{g}_{n,j+\f12}\,\testR^-_{n,j+1/2}-\hat{g}_{n,j-1/2}\,\testR^+_{n,j-1/2}\,,
    \\[1.1em]
\ds g_{n}(C_\delta) \,=\,\frac{1}{\alpha}\left(\sqrt{n}\,C_{\delta,n-1}\,+\,\sqrt{n+1}\,C_{\delta,n+1}\right)\,.
\end{array}\right.
\eeq

The numerical flux $\hat{g}_{n}$ in \eqref{anh} is given by
\beq
\label{lfflx}
\hat{g}_{n}\,=\,\f12\left[g^-_{n}(C_\delta)+g^+_{n}(C_\delta)\,-\,\frac{\nu_n}{\alpha}\,\left(C^+_{\delta,n}\,-\,C^-_{\delta,n}\right)\right]\,,
\eeq
with the numerical viscosity coefficient $\nu_n$ such that
$\nu_n\in [\underline{\nu},\overline{\nu}]$ with $0<\underline{\nu}\leq \overline{\nu}<\infty$.


Therefore the approximate solution of \eqref{vlasov} obtained using Hermite polynomials in velocity variable and discontinuous Galerkin discretization in space is then defined by 
\beq
\label{dgfseries}
f_{\delta}(t,x,v)=\sum_{n=0}^{N-1}C_{\delta,n}(t,x)\,\Psi_n(t,v)\,,
\eeq
where $\delta=(1/N,h)$ is a small parameter,  $(C_{\delta,n})_n$ satisfy \eqref{dgcn} and $(\Psi_n)_n$ are the basis functions defined by \eqref{hbasisf}.

We now deal with the approximation $E_{\delta}$ of the electric field. To this end, we consider the potential function $\Phi_{\delta}(t,x)$, such that
\begin{equation}
  \label{approxPoisson}
\left\{
\begin{array}{l}
\ds E_{\delta}\,=\,-\f{\partial \Phi_{\delta}}{\px}\,, \\[0.9em]
\ds\f{\partial E_{\delta}}{\px} \,=\,C_{\delta,0} - \rho_{0,\delta}\,.
\end{array}\right.
\end{equation}
Hence we get the one dimensional Poisson equation 
$$
-\f{\partial^2 \Phi_{\delta}}{\px^2} \,=\,C_{\delta,0} \,-\, \rho_{0,\delta}\,,
$$
with $\rho_{0,\delta}$ such that
$$
\int_{\TT} \left(C_{\delta,0} - \rho_{0,\delta}\right) \,\dD x = 0\,. 
$$
We simply consider a conforming approximation of the electric potential, corresponding to a direct integration of this Poisson problem \eqref{approxPoisson}, which is straightforward in 1D.

\textcolor{black}{In what follows, we study the scheme \eqref{dgcn}--\eqref{approxPoisson}, where $\alpha$ is a time-dependent function defined in the next subsection by \eqref{eq:def_alpha}, and not a constant scaling parameter as usual. In \cite{BCF2021}, we provide a study of the conservation properties satisfied by this type of discontinuous Galerkin/Hermite spectral methods with a time-dependent scaling function $\alpha$. It appears that the conservation properties only rely on the choice of the spatial discretization and not on the definition of $\alpha$. Indeed, we proved in \cite[Proposition 2.1]{BCF2021} the conservation of mass, momentum and total energy for the Hermite velocity discretization \eqref{cn}. 
Then, concerning the spatial discretization, we established in \cite[Theorem 3.4]{BCF2021} the conservation of the discrete total energy for the scheme \eqref{dgcn}--\eqref{lfflx} with a centered numerical flux $\hat{g}_0$ (corresponding to $\nu_0=0$ in \eqref{lfflx}) together with a discontinuous Galerkin approximation of the Poisson equation. 
}

\subsection{Discussion on the scaling function $\alpha$ and main results}
Before to state an error estimate on the numerical solution to
\eqref{dgcn}-\eqref{approxPoisson}, let us introduce the suitable
functional framework. We set $\mu_t$ the measure given as 
\begin{equation}\label{def_dmu}
\dD \mu_t =
\omega(t,v)\dD x\dD v
\end{equation}
where the weight $\omega$ is provided in \eqref{eq:def_omega} and the following $L^2$ weighted space given by
\begin{equation*}
L^2(\dD\mu_t)\,:=\,\left\{g : \TT\times\RR\to\RR : \iint_{\TT\times\RR}
  |g(x,v)|^2\dD \mu_t <+\infty\right\},
\end{equation*}
with $\langle\cdot,\cdot\rangle_{L^2(\dD \mu_t)}$ the associated inner product, that is
\begin{equation*}
\langle f,g\rangle_{L^2(\dD\mu_t)} \,=\,\iint_{\TT\times\RR}f(x,v)\,g(x,v)\,\dD\mu_t\,,
\end{equation*}
and $\|\cdot\|_{L^2(\dD\mu_t)}$  the corresponding norm.

\textcolor{black}{Let us remark that for all functions $\alpha,\,\tilde{\alpha}:\RR_+\to\RR_+$ such that $\tilde{\alpha}(t)\leq\alpha(t)$ for all $t\geq 0$, the associated $\omega,\,\tilde{\omega}$ defined by \eqref{eq:def_omega} satisfy $\tilde{\omega}(t,v)\leq \omega(t,v)$ for all $t\geq 0,\,v\in\RR$. Then, the corresponding weighted $L^2$ norms verify
\[\int\int_{\TT\times\RR}|f(x,v)|^2\,\tilde{\omega}(t,v) \dD x\,\dD v\,\leq\, \int\int_{\TT\times\RR}|f(x,v)|^2\,\omega(t,v) \dD x\,\dD v.\]
In particular, taking $\tilde{\alpha}= 0$, the standard $L^2(\dD x\dD v)$ norm is controlled by the weighted $L^2$ norm:
\[\|f\|_{L^2(\dD x\,\dD v)}^2\,=\, \int\int_{\TT\times\RR}|f(x,v)|^2\,\dD x\,\dD v \,\leq\,  \int\int_{\TT\times\RR}|f(x,v)|^2\,\omega(t,v) \dD x\,\dD v \,=\,\|f\|_{L^2(\dD\mu_t)}^2.\]
 }

The first issue is to find the appropriate framework for the stability of approximations based
on asymmetri\-cally-weighted Hermite basis. Indeed, this choice fails to
preserve \textcolor{black}{the} $L^2$ norm of the approximate solution, and therefore to ensure \textcolor{black}{the}
long-time stability of the method.  Consequently, we introduce  a $L^2$ weighted space, with a time-dependent weight,
allowing to prove \textcolor{black}{the} global stability of the solution in this
space \cite{BCF2021}. Actually, this idea has been already employed in \cite{Ma2005,
  Ma2007} to stabilize Hermite spectral methods for linear diffusion
equations and nonlinear convection-diffusion equations in unbounded
domains, yielding stability and spectral convergence of the considered
methods. The main point now is to determine a function $\alpha$. \textcolor{black}{We proved the following result in \cite[Proposition 3.2]{BCF2021}.
\begin{prop}
\label{prop:stab_L2dvdx}
Let $(f_\delta,E_\delta)$ be the approximate solution defined by \eqref{dgcn}--\eqref{approxPoisson}, with the scaling function $\alpha$ defined by
\begin{equation}
  \label{eq:def_alpha}
\alpha(t)\,:=\,\alpha_0\,\left(1+\gamma\,\alpha_0^2\,\int_0^t\max(1,\|E_\delta(s)\|^2_{L^\infty}) \dD s\right)^{-1/2}.
\end{equation}
Assume that $\|f_\delta(0)\|_{L^2(\dD\mu_0)}<+\infty$. Then, for any $t\geq 0$, we have
\begin{equation}
\frac{\dD}{\dD t}\|f_{\delta}(t)\|_{L^2(\dD\mu_t)}^2\,:=\,\frac{\dD}{\dD t}
\left(\alpha(t)\,\sum_{n=0}^{N-1} \int_\TT |C_{\delta,n}|^2 \dD x 
\right)\,\leq\,-\sum_{n=0}^{N-1}\sum_{j\in\hat\mJ}\nu_n\,[C_{\delta,n}]_{j-\frac{1}{2}}^2+ \frac{1}{2\,\gamma}\,\|f_{\delta}(t)\|_{L^2(\dD\mu_t)}^2,
\end{equation}
from which we deduce
  \begin{equation}
 \label{eq2:stabL2dvdx}
\|f_{\delta}(t)\|_{L^2(\dD\mu_t)}\,\leq\, \|f_{\delta}(0)\|_{L^2(\dD\mu_0)}\,e^{t/4\gamma}\,,
\end{equation}
where $\gamma>0$ is the fixed parameter, which can be chosen arbitrarily, appearing in the definition \eqref{eq:def_alpha} of $\alpha$.
\end{prop}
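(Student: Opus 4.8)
The plan is to derive the stated differential inequality by a weighted energy estimate carried out at the level of the Hermite modes, and then to conclude by Grönwall's lemma. For fixed $N$ and $h$ the scheme \eqref{dgcn}--\eqref{approxPoisson} is a finite-dimensional ODE system, so $t\mapsto C_{\delta,n}(t,\cdot)\in X_h$ is $C^1$ and every manipulation below is legitimate. Using the orthogonality relation \eqref{proportho} one has $\|f_\delta(t)\|_{L^2(\dD\mu_t)}^2 = \alpha(t)\sum_{n=0}^{N-1}\int_\TT |C_{\delta,n}|^2\,\dD x$, whence, differentiating in time,
\[
\frac{\dD}{\dD t}\|f_\delta(t)\|_{L^2(\dD\mu_t)}^2 \,=\, \alpha'(t)\sum_{n=0}^{N-1}\int_\TT |C_{\delta,n}|^2\,\dD x \,+\, \alpha(t)\sum_{n=0}^{N-1}\frac{\dD}{\dD t}\int_\TT |C_{\delta,n}|^2\,\dD x .
\]
First I would compute each $\frac{\dD}{\dD t}\int_\TT |C_{\delta,n}|^2\,\dD x$ by taking $\varphi_n = C_{\delta,n}(t,\cdot)$ in \eqref{dgcn} and summing over $j\in\mJ$, which gives $\tfrac12\frac{\dD}{\dD t}\int_\TT |C_{\delta,n}|^2\,\dD x + \sum_{j\in\mJ}\mA_{n,j}(g_n(C_\delta),C_{\delta,n}) = \int_\TT \mS_n[C_\delta,E_\delta]\,C_{\delta,n}\,\dD x$. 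Multiplying by $2\alpha$ and putting things together, the estimate reduces to controlling the flux sum $-2\alpha\sum_{n,j}\mA_{n,j}$ and the source sum $2\alpha\sum_n\int_\TT \mS_n\,C_{\delta,n}\,\dD x$, together with $\alpha'\sum_n\int_\TT|C_{\delta,n}|^2\,\dD x$.

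Second, I would treat the transport/flux contribution. The key structural observation is that $n\mapsto g_n(C_\delta)$ is, up to the factor $1/\alpha$, multiplication by the symmetric tridiagonal Hermite recurrence matrix (the truncation being harmless since a truncated symmetric matrix is still symmetric); consequently $\sum_n g_n(C_\delta)\,\partial_x C_{\delta,n} = \partial_x\left(\tfrac1\alpha\sum_n \sqrt{n}\,C_{\delta,n-1}\,C_{\delta,n}\right)$ is an exact $x$-derivative, so the volume integrals in $\mA_{n,j}$ collapse to interface values. Summing over all cells of the torus, the consistent part of the Lax--Friedrichs flux \eqref{lfflx} telescopes against those interface values — again by symmetry of the recurrence matrix, which makes $\sum_n\{g_n\}\,[C_{\delta,n}]$ coincide with the jump of the interface quadratic form — and only the viscous part survives, yielding $\sum_{n}\sum_{j}\mA_{n,j}(g_n(C_\delta),C_{\delta,n}) = \tfrac{1}{2\alpha}\sum_{n}\sum_{j\in\hat\mJ}\nu_n\,[C_{\delta,n}]_{j-1/2}^2 \ge 0$. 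After multiplication by $2\alpha$ this is exactly the dissipative term $-\sum_{n}\sum_{j\in\hat\mJ}\nu_n\,[C_{\delta,n}]_{j-1/2}^2$ of the statement. This is the only place where the discontinuous Galerkin structure enters.

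Third, the source term. I would split $\mS_n$ as in \eqref{cn} and group the two pieces carrying $\alpha'/\alpha$ with the leftover $\alpha'$-term. Using the identity $\alpha\,v\,\Psi_n = \sqrt{n+1}\,\Psi_{n+1} + \sqrt{n}\,\Psi_{n-1}$, which follows from the Hermite recurrence and \eqref{hbasisf}, one verifies the two moment identities
\[
\sum_{n=0}^{N-1}\int_\TT\left((1+2n)\,|C_{\delta,n}|^2 + 2\sqrt{n(n-1)}\,C_{\delta,n-2}\,C_{\delta,n}\right)\dD x = \alpha(t)\iint_{\TT\times\RR} v^2\,|f_\delta|^2\,\omega(t,v)\,\dD x\,\dD v \,=:\, \alpha(t)\,M_2(t),
\]
\[
\sum_{n=0}^{N-1}\int_\TT \sqrt{n}\,C_{\delta,n-1}\,C_{\delta,n}\,\dD x = \frac12\iint_{\TT\times\RR} v\,|f_\delta|^2\,\omega(t,v)\,\dD x\,\dD v .
\]
The first one shows that the $\alpha'/\alpha$-block plus the $\alpha'$-term equal $\alpha'(t)\,\alpha(t)\,M_2(t)$, which is $\le 0$ since $\alpha$ is nonincreasing; indeed \eqref{eq:def_alpha} gives the crucial identity $\alpha'/\alpha = -\tfrac{\gamma}{2}\,\alpha^2\,\max(1,\|E_\delta\|_{L^\infty}^2)$. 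The second identity turns the electric-field piece into $2\alpha^2\sum_n\int_\TT E_\delta\,\sqrt n\,C_{\delta,n-1}C_{\delta,n}\,\dD x = \alpha^2\iint_{\TT\times\RR} E_\delta(t,x)\,v\,|f_\delta|^2\,\omega\,\dD x\,\dD v$.

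Finally I would close the estimate with Young's inequality: since $\alpha^2\|E_\delta\|_{L^\infty}\,|v| \le \tfrac a2\,\alpha^2 v^2 + \tfrac{1}{2a}\,\alpha^2\|E_\delta\|_{L^\infty}^2$ for every $a>0$,
\[
\alpha^2\left|\iint_{\TT\times\RR} E_\delta\,v\,|f_\delta|^2\,\omega\right| \,\le\, \frac a2\,\alpha^2 M_2(t) \,+\, \frac{1}{2a}\,\alpha^2\,\|E_\delta\|_{L^\infty}^2\,\|f_\delta(t)\|_{L^2(\dD\mu_t)}^2 .
\]
Choosing $a = \gamma\,\alpha^2\,\max(1,\|E_\delta\|_{L^\infty}^2)$, the first term on the right is exactly absorbed by $\alpha'\alpha M_2 = -\tfrac{\gamma}{2}\,\alpha^2\max(1,\|E_\delta\|_{L^\infty}^2)\,\alpha^2 M_2$, while the second is bounded by $\tfrac{1}{2\gamma}\,\|f_\delta(t)\|_{L^2(\dD\mu_t)}^2$ because $\|E_\delta\|_{L^\infty}^2 \le \max(1,\|E_\delta\|_{L^\infty}^2)$. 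Collecting the three contributions gives exactly the claimed differential inequality, and Grönwall's lemma, after dropping the nonpositive jump terms, yields $\|f_\delta(t)\|_{L^2(\dD\mu_t)}^2 \le \|f_\delta(0)\|_{L^2(\dD\mu_0)}^2\,e^{t/(2\gamma)}$, that is \eqref{eq2:stabL2dvdx}. The main obstacle is not a single estimate but two bookkeeping points: getting the DG flux telescoping right — correct sign, correct constant, using symmetry of the recurrence matrix twice — and, at the algebraic level, recognizing that the scaling function \eqref{eq:def_alpha} is tuned so that the $v^2$-moment $M_2$ it produces cancels precisely the worst part of the electric-field term, leaving only the harmless $\tfrac{1}{2\gamma}$-term coming from the ``$1$'' in $\max(1,\|E_\delta\|_{L^\infty}^2)$.
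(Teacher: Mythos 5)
Your proposal is correct and follows essentially the same route as the paper's own proof (imported from \cite[Proposition 3.2]{BCF2021} and mirrored at the continuous level in the proof of Proposition \ref{prop:1}): testing \eqref{dgcn} with $\varphi_n=C_{\delta,n}$, letting the DG fluxes telescope so that only the $\nu_n$-dissipation survives, rewriting the $\alpha'$-block and the $E_\delta$-block as the $v^2$- and $v$-moments of $|f_\delta|^2\omega$ via the Hermite recurrence, and absorbing the electric-field term by Young's inequality using $\alpha'=-\tfrac{\gamma}{2}\max(1,\|E_\delta\|_{L^\infty}^2)\,\alpha^3$, which yields exactly the stated inequality and then \eqref{eq2:stabL2dvdx} by Gr\"onwall. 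The mode-level identities you verify (including the harmless truncation at $n=N$) are precisely the discrete counterparts of the computation in Proposition \ref{prop:1}, so no genuinely different idea is involved.
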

To study the convergence of the numerical method, we also need to establish a stability result for the exact solution $f$ in the weighted norm $L^2(\dD \mu_t)$, where the weight depends on the approximate solution (see  definition \eqref{eq:def_alpha} of $\alpha$).
\begin{prop}
  \label{prop:1}
  Consider $(f,E)$ a smooth solution of the Vlasov-Poisson system
  \eqref{vlasov}. Assuming that the initial data $f_0$ belongs to $L^2(\dD\mu_0)$, then there
  exists $c_0>0$ such that the solution $f(t)$ satisfies for all $t \geq 0$:
 \begin{equation*}
\|f(t)\|_{L^2(\dD\mu_t)}\leq \|f_{0}\|_{L^2(\dD\mu_0)}\, e^{t/4\eta},
\end{equation*}
where $\dD\mu_t$ is defined by \eqref{def_dmu}, with $\alpha$ appearing in the weight $\omega$ given by \eqref{eq:def_alpha}.\\
The parameter $\eta > 0$ has to be chosen small enough, namely such that $\eta\,C/\gamma<1$, where $C>0$ is a constant depending only on the mass of $f_0$ such that $\|E(t)\|_{L^\infty}^2\leq C$ for all $t\geq 0$, and $\gamma>0$ is the fixed parameter appearing in the definition \eqref{eq:def_alpha} of $\alpha$.
\end{prop}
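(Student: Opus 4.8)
The plan is to derive a Gr\"onwall differential inequality for the squared weighted norm $y(t):=\|f(t)\|_{L^2(\dD\mu_t)}^2$, in the spirit of the stability estimate of Proposition~\ref{prop:stab_L2dvdx}; the crucial algebraic point is that the time derivative of the weight $\omega$ has been calibrated against the electric force through the ODE satisfied by $\alpha$. First I would record, from the definition \eqref{eq:def_alpha}, that
\[
\frac{1}{\alpha^2(t)}\,=\,\frac{1}{\alpha_0^2}\,+\,\gamma\int_0^t\max(1,\|E_\delta(s)\|_{L^\infty}^2)\,\dD s,
\]
so that $\alpha$ is nonincreasing and $\alpha(t)\,\alpha'(t)=-\frac{\gamma}{2}\,\alpha^4(t)\,\max(1,\|E_\delta(t)\|_{L^\infty}^2)$, together with the elementary identities $\partial_v\omega=\alpha^2\,v\,\omega$ and $\partial_t\omega=\alpha\,\alpha'\,v^2\,\omega\le 0$ obtained by differentiating \eqref{eq:def_omega}.

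Next I would differentiate $y$. Since $(f,E)$ is a smooth solution, the classical uniform-in-time $L^\infty$ bound $\|E(t)\|_{L^\infty}\le\sqrt{C}$ for the one-dimensional Vlasov--Poisson field (with $C$ depending only on the conserved mass of $f_0$) forces the characteristic velocities to grow at most linearly in time, so $f(t)$ retains, for each fixed $t$, the velocity decay of $f_0$ against the growing Gaussian weight $\omega(t,\cdot)$; this makes $y(t)$ finite and differentiable and legitimizes the integrations by parts below. Using the Vlasov equation $\partial_t f=-v\,\partial_x f-E\,\partial_v f$,
\[
y'(t)\,=\,\iint_{\TT\times\RR}2\,f\,\partial_t f\,\omega\,\dD x\,\dD v\,+\,\iint_{\TT\times\RR}|f|^2\,\partial_t\omega\,\dD x\,\dD v.
\]
The transport contribution vanishes by $x$-periodicity, since $\int_\TT\partial_x(f^2)\,\dD x=0$ and $v\,\omega$ is $x$-independent; an integration by parts in $v$ (no boundary term, by the decay just noted) turns the force contribution into $-\iint E\,\partial_v(f^2)\,\omega\,\dD x\,\dD v=\alpha^2\iint E\,v\,f^2\,\omega\,\dD x\,\dD v$. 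Hence
\[
y'(t)\,=\,\alpha^2(t)\iint_{\TT\times\RR}E\,v\,f^2\,\omega\,\dD x\,\dD v\,+\,\alpha(t)\,\alpha'(t)\iint_{\TT\times\RR}v^2\,f^2\,\omega\,\dD x\,\dD v.
\]

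The final step is a Young inequality tuned so that the nonpositive weight term absorbs the dangerous part of the force term. Fixing $\beta\in(0,\gamma)$, the pointwise bound $\alpha^2|E|\,|v|\le\frac{1}{2\beta}|E|^2+\frac{\beta}{2}\,\alpha^4 v^2$ together with $\|E(t)\|_{L^\infty}^2\le C$ gives
\[
\alpha^2\iint_{\TT\times\RR}E\,v\,f^2\,\omega\,\dD x\,\dD v\,\le\,\frac{C}{2\beta}\,y(t)\,+\,\frac{\beta}{2}\,\alpha^4\iint_{\TT\times\RR}v^2\,f^2\,\omega\,\dD x\,\dD v,
\]
while $\max(1,\cdot)\ge 1$ yields $\alpha\,\alpha'\iint v^2 f^2\,\omega\le-\frac{\gamma}{2}\,\alpha^4\iint v^2 f^2\,\omega$. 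Adding these, the $v^2$-weighted integrals combine with the nonpositive factor $\frac{\beta-\gamma}{2}\,\alpha^4$ and drop out, leaving $y'(t)\le\frac{C}{2\beta}\,y(t)$; Gr\"onwall's lemma then gives $\|f(t)\|_{L^2(\dD\mu_t)}\le\|f_0\|_{L^2(\dD\mu_0)}\,e^{Ct/4\beta}$. Finally, given $\eta>0$ with $\eta\,C/\gamma<1$, i.e.\ $\eta C<\gamma$, one may pick $\beta\in[\eta C,\gamma)$, so that $Ct/4\beta\le t/4\eta$ and the stated bound follows.

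I expect the only genuine difficulty to be the point flagged in the second paragraph: ensuring that the exact solution keeps enough velocity decay against the \emph{growing} Gaussian weight so that $y(t)<+\infty$ and the $v$-integration by parts leaves no boundary term. This reduces to the classical uniform $L^\infty$ estimate for the field of the one-dimensional Vlasov--Poisson system in terms of the mass of $f_0$ (the source of the constant $C$), combined either with a propagation-of-support argument along the characteristics or with a truncation of the weight followed by a passage to the limit. Everything else is the same Young-inequality bookkeeping as in the proof of Proposition~\ref{prop:stab_L2dvdx}, the crux being that the ODE for $\alpha$ is designed precisely so that the dissipative term $\alpha\,\alpha'\iint v^2 f^2\,\omega$ exactly swallows the bad term produced by the electric force.
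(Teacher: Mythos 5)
Your proof is correct and follows essentially the same route as the paper: differentiate the weighted norm along the Vlasov flow together with $\partial_t\omega=\alpha\alpha' v^2\omega$, use $\alpha\alpha'=-\frac{\gamma}{2}\alpha^4\max(1,\|E_\delta\|_{L^\infty}^2)$ and the one-dimensional bound $\|E(t)\|_{L^\infty}^2\le C$ so that a Young inequality lets the nonpositive $\alpha'$ term absorb the $v^2$-weighted contribution of the force, then conclude by Gr\"onwall. The only differences are cosmetic: you attach $|E|^2$ to the zeroth-order term and re-tune the Young parameter $\beta\in[\eta C,\gamma)$ at the end to recover the stated rate $e^{t/4\eta}$, and you explicitly flag the finiteness/integration-by-parts justification that the paper's computation leaves implicit.
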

\begin{proof}
Using the Vlasov equation \eqref{vlasov} and the definition \eqref{eq:def_omega} of the weight $\omega$, we have
\[\frac{1}{2}\frac{\dD}{\dD t}\|f(t)\|_{L^2(\dD\mu_t)}^2\, = \, \frac{1}{2}\int\int_{\TT\times\RR}f^2\,(\alpha^2 \,E\,v+\alpha\,\alpha'\,|v|^2)\omega\,\dD x\,\dD v.
\]
Applying now the Young inequality on the first term, we get for $\eta>0$,
\[\frac{1}{2}\frac{\dD}{\dD t}\|f(t)\|_{L^2(\dD\mu_t)}^2\,\leq \, \frac{1}{2}\left(\frac{\eta}{2}\|E\|_{L^\infty}^2\alpha^4+\alpha'\alpha\right)\int\int_{\TT\times\RR}f^2\,|v|^2\omega\,\dD x\,\dD v+\frac{1}{4\,\eta}\,\|f(t)\|_{L^2(\dD\mu_t)}^2.
\]
Then, using the definition \eqref{eq:def_alpha} of $\alpha$, it is clear that
\[ \alpha'=-\frac{\gamma}{2}\,\max(1,\|E_\delta\|_{L^\infty}^2)\,\alpha^3,\]
leading to
\[\frac{1}{2}\frac{\dD}{\dD t}\|f(t)\|_{L^2(\dD\mu_t)}^2\,\leq \, \frac{1}{2}\left(\frac{\eta}{2}\|E\|_{L^\infty}^2-\frac{\gamma}{2}\max(1,\|E_\delta\|_{L^\infty}^2)\right)\alpha^4\int\int_{\TT\times\RR}f^2\,|v|^2\omega\,\dD x\,\dD v+\frac{1}{4\,\eta}\,\|f(t)\|_{L^2(\dD\mu_t)}^2.
\]
In one space dimension, the $L^1$ estimate on $f$ can be used to bound the electric field, namely there exists a constant $C>0$ depending only on the initial mass of $f$ such that for all $t\geq 0$, $\|E(t)\|_{L^\infty}^2\leq C$.\\
Then, choosing $\eta>0$ such that $\eta\,C/\gamma<1$, the first term of the right-hand side is nonpositive, which concludes the proof.
\end{proof}
Notice that the functional space $L^2(\dD \mu_t)$ depends on $\alpha$ given by \eqref{eq:def_alpha}, and then on the discretization parameter itself through the term $\|E_\delta(t)\|_{L^\infty}$ involved in this definition. Therefore, to establish a convergence result, it is mandatory to control $\alpha$ uniformly with respect to $\delta$. This control is achieved by bounding $\|E_\delta(t)\|_{L^\infty}$ uniformly with respect to $\delta$.
\begin{prop}\label{prop:bornes_alpha}
We consider a solution $(f_\delta,E_\delta)$ of \eqref{dgcn}--\eqref{approxPoisson}, with $\alpha$ defined by \eqref{eq:def_alpha}. We assume that $\|f_\delta(0)\|_{L^2(\dD\mu_0)}<+\infty$. Let $T>0$ be a fixed final time.\\
Then, there exists a constant $C_T>0$ independent of the discretization parameter $\delta$ such that 
\begin{equation*}
\|E_\delta(t)\|_{L^\infty}\leq C_T, \quad \forall t\in [0,T].
\end{equation*}
Moreover, the scaling function $\alpha$ satisfies
\begin{equation}\label{control_alpha}
0<\underline{\alpha}_T\leq \alpha(t)\leq \alpha_0, \quad\forall t\in [0,T],
\end{equation}
where the constant $\underline{\alpha}_T$ is independent of $\delta$ and given by
\begin{equation*}
\underline{\alpha}_T\,:=\,\alpha_0\,(1+\gamma\,\alpha_0^2(1+C_T)\,T)^{-1/2}.
\end{equation*}
\end{prop}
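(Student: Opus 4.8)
The plan is to obtain the uniform $L^\infty$ bound on $E_\delta$ from the $L^2(\dD\mu_t)$ stability estimate of Proposition~\ref{prop:stab_L2dvdx}, exactly as in the continuous case, and then read off the bounds on $\alpha$ from its definition \eqref{eq:def_alpha}. The key point is that in one space dimension the electric field is controlled by low-order moments of $f_\delta$, so we only need to bound the mass and a bit more — and crucially the weighted norm $\|f_\delta(t)\|_{L^2(\dD\mu_t)}$ controls $\|f_\delta(t)\|_{L^2(\dD x\dD v)}$ via the remark following \eqref{def_dmu}, uniformly in $\delta$ once we grant the exponential bound \eqref{eq2:stabL2dvdx}.

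First I would recall that $\rho_{N,\delta} = C_{\delta,0}$ and that $E_\delta$ solves the 1D Poisson equation \eqref{approxPoisson} with the neutrality normalization; integrating explicitly gives a representation $E_\delta(t,x) = \int_\TT G(x,y)\,(C_{\delta,0}(t,y)-\rho_{0,\delta})\,\dD y$ for the periodic Green's function $G$, whence $\|E_\delta(t)\|_{L^\infty} \lesssim \|C_{\delta,0}(t)\|_{L^1(\TT)} + |\rho_{0,\delta}|\,\mathrm{mes}(\TT)$. Next, $C_{\delta,0}(t,\cdot) = \int_\RR f_\delta(t,\cdot,v)\,\dD v$, so by Cauchy--Schwarz against the Gaussian weight, $\|C_{\delta,0}(t)\|_{L^1(\TT)} \le \|f_\delta(t)\|_{L^2(\dD x\dD v)}\,\big(\int_\RR \omega(t,v)^{-1}\dD v\big)^{1/2}\,\mathrm{mes}(\TT)^{1/2}$; since $\int_\RR \omega(t,v)^{-1}\dD v = (2\pi)^{-1/2}\int_\RR e^{-\alpha^2 v^2/2}\dD v = \alpha(t)^{-1}$, and $\alpha(t)\le\alpha_0$ is immediate from \eqref{eq:def_alpha} (the integrand there is nonnegative), this factor is bounded by $\alpha_0^{-1/2}$. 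Combining with Proposition~\ref{prop:stab_L2dvdx} and the remark that $\|f_\delta\|_{L^2(\dD x\dD v)}\le\|f_\delta\|_{L^2(\dD\mu_t)}$, we get $\|E_\delta(t)\|_{L^\infty} \le C\,\|f_\delta(0)\|_{L^2(\dD\mu_0)}\,e^{t/4\gamma} + C' =: C_T$ for $t\in[0,T]$, with $C_T$ depending only on $T$, $\gamma$, $\alpha_0$, $\mathrm{mes}(\TT)$, $\rho_0$ and $\|f_\delta(0)\|_{L^2(\dD\mu_0)}$ — hence independent of $\delta$ (the neutrality condition \eqref{bc00} fixes $\rho_{0,\delta}$ in terms of the discrete initial mass, which converges to that of $f_0$, so is uniformly bounded).

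Finally, with $\|E_\delta(s)\|_{L^\infty}\le C_T$ in hand for all $s\in[0,T]$, I plug $\max(1,\|E_\delta(s)\|_{L^\infty}^2)\le 1+C_T^2$ (or more simply $1+C_T$, absorbing constants) into \eqref{eq:def_alpha}: the integral $\int_0^t\max(1,\|E_\delta(s)\|^2_{L^\infty})\dD s \le (1+C_T)\,T$ for $t\le T$, so $\alpha(t) \ge \alpha_0\,(1+\gamma\alpha_0^2(1+C_T)T)^{-1/2} = \underline{\alpha}_T$, while $\alpha(t)\le\alpha_0$ as noted. This gives \eqref{control_alpha}.

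The one genuine subtlety — and the main obstacle — is that the bound on $\|E_\delta\|_{L^\infty}$ derived from Proposition~\ref{prop:stab_L2dvdx} already carries the factor $e^{t/4\gamma}$, which is fine on a \emph{fixed} interval $[0,T]$ but means $C_T$ (and hence $\underline\alpha_T$) degenerate as $T\to\infty$; one must be careful to state everything on a fixed final time $T$, as the proposition does. A secondary point to verify is that $\rho_{0,\delta}$ is uniformly bounded in $\delta$: this follows because $\int_\TT C_{\delta,0}(0)\,\dD x = \iint f_\delta(0)\,\dD v\,\dD x$, which is a Hermite/DG projection of $f_0$ and so has mass converging to $\mathrm{mes}(\TT)\rho_0$; alternatively one may simply assume, as is standard, that the discrete initial data is chosen to respect neutrality exactly, so $\rho_{0,\delta}=\rho_0$. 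No regularity of $f_0$ beyond $f_0\in L^2(\dD\mu_0)$ is needed, matching the hypothesis.
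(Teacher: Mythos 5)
There is a genuine gap in the step where you bound the Gaussian factor. After the weighted Cauchy--Schwarz in $v$ you correctly obtain a factor $\bigl(\int_\RR \omega(t,v)^{-1}\dD v\bigr)^{1/2}=\alpha(t)^{-1/2}$, but your claim that ``since $\alpha(t)\le\alpha_0$ this factor is bounded by $\alpha_0^{-1/2}$'' goes in the wrong direction: $\alpha(t)\le\alpha_0$ gives $\alpha(t)^{-1/2}\ge\alpha_0^{-1/2}$, not $\le$. What you actually get is $\|E_\delta(t)\|_{L^\infty}\le \mC\,\alpha(t)^{-1/2}\|f_\delta(t)\|_{L^2(\dD\mu_t)}+\mC'$, and to turn this into a $\delta$-independent constant you need precisely the lower bound on $\alpha$ that the proposition is trying to establish — the argument as written is circular. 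This is not a cosmetic issue: because the numerical density $C_{\delta,0}$ has no sign, one cannot fall back on the $L^1$/mass bound that works for the exact solution in Proposition \ref{prop:1}, so the $\alpha$-dependence in the electric-field estimate is unavoidable and must be handled. The paper (via \cite[Theorem 3.6]{BCF2021}, using Sobolev and Poincaré--Wirtinger rather than the Green's function, which is an equivalent route in 1D) breaks the circle by coupling the estimate with the equation $\alpha'=-\tfrac{\gamma}{2}\max(1,\|E_\delta\|_{L^\infty}^2)\,\alpha^3$: plugging $\|E_\delta\|_{L^\infty}^2\le \mC\,\alpha^{-1}\|f_\delta\|^2_{L^2(\dD\mu_t)}$ (with $\|f_\delta(t)\|_{L^2(\dD\mu_t)}$ controlled by \eqref{eq2:stabL2dvdx}) into this ODE yields a differential inequality for $\alpha$ alone, whose integration on $[0,T]$ gives a positive lower bound independent of $\delta$, and only then does one recover the uniform bound $C_T$ on $\|E_\delta\|_{L^\infty}$. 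You need to add this bootstrap/Gronwall step; once $C_T$ is in hand, your final deduction of $\underline{\alpha}_T$ from \eqref{eq:def_alpha} and the monotonicity giving $\alpha(t)\le\alpha_0$ are correct and match the paper.

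A minor secondary point: your displayed Cauchy--Schwarz mixes the unweighted norm $\|f_\delta\|_{L^2(\dD x\dD v)}$ with the weighted factor $\bigl(\int\omega^{-1}\dD v\bigr)^{1/2}$; the inequality that actually comes out of pairing $f_\delta\,\omega^{1/2}$ with $\omega^{-1/2}$ involves $\|f_\delta\|_{L^2(\dD\mu_t)}$, which is what you use afterwards anyway, so this is easily repaired. Your observations about $\rho_{0,\delta}$ and about the $T$-dependence of the constants are fine and consistent with the statement.
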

\begin{proof}
The proof of the uniform $L^\infty$ bound on $E_\delta$ is mainly based on the Sobolev and Poincaré-Wirtinger inequalities, together with the stability estimate \eqref{eq2:stabL2dvdx}, as detailed in \cite[Theorem 3.6]{BCF2021}. Thanks to this bound, it is straightworfard to obtain the lower bound of $\alpha$ by using the definition \eqref{eq:def_alpha}. The upper bound $\alpha_0$ is also clear since $\alpha$ is a nonincreasing function.
\end{proof}
We are now in position to state our main result.
\begin{thm}
  \label{th:main}
For any $t\in [0,T]$,  consider the scaling function $\alpha$
defined by \eqref{eq:def_alpha} and let $f(t,.) \in \HH^m(\dD\mu_t)$ be the solution
of the Vlasov-Poisson system \eqref{vlasov} where $m \geq k+1$ and
$f_{\delta}$ be the approximation defined by
\eqref{dgcn}-\eqref{dgfseries}. Then there exists a constant $\mC>0$,
independent of $\delta\,=\,(h,1/N)$, but depending on $T$, such that
\begin{equation}
  \label{res:main}
\|f(t)-f_{\delta}(t)\|_{L^2(\dD x\,\dD v)}\leq\|f(t)-f_{\delta}(t)\|_{L^2(\dD\mu_t)}\leq
\mC\,\left[\frac{1}{N^{(m-1)/2}}\,+\, h^{k+1/2}\right].
\end{equation}
\end{thm}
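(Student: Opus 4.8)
\emph{Approach.} The plan is to carry out the whole estimate in the time-dependent weighted norm $\|\cdot\|_{L^2(\dD\mu_t)}$, so that the dissipation mechanism of Proposition~\ref{prop:stab_L2dvdx} — and the cancellation that produced it — can be reused for the error. I would split
\[
f-f_\delta\;=\;\underbrace{(f-P_N f)}_{\text{velocity truncation}}\;+\;\underbrace{\textstyle\sum_{n=0}^{N-1}(C_n-\pi_h C_n)\,\Psi_n}_{\text{spatial projection}}\;+\;\underbrace{\textstyle\sum_{n=0}^{N-1}(\pi_h C_n-C_{\delta,n})\,\Psi_n}_{=:\,e_\delta},
\]
where $P_N f=\sum_{n<N}C_n\Psi_n$ with $C_n=\alpha^{-1}\langle f,\Psi_n\rangle_{L^2(\omega\,\dD v)}$ and $\pi_h$ is the elementwise $L^2$-projection onto $X_h$. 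The first two pieces are controlled purely by the approximation results of Section~\ref{sec:3}: the spectral accuracy of the Hermite expansion gives $\|f-P_N f\|_{L^2(\dD\mu_t)}\lesssim N^{-m/2}\|f\|_{\HH^m(\dD\mu_t)}$, and the standard discontinuous Galerkin interpolation estimates give $\big\|\sum_n(C_n-\pi_h C_n)\Psi_n\big\|_{L^2(\dD\mu_t)}\lesssim h^{k+1}$ together with the trace bounds $\sum_j[C_n-\pi_h C_n]_{\jL}^2\lesssim h^{2k+1}\|C_n\|^2_{H^{k+1}(\TT)}$ that will enter the energy estimate below; all the $\alpha(t)$-weights are harmless thanks to $\underline\alpha_T\le\alpha(t)\le\alpha_0$ (Proposition~\ref{prop:bornes_alpha}). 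It therefore remains to estimate $\|e_\delta(t)\|_{L^2(\dD\mu_t)}$.

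\emph{Error equation.} Since the infinite system \eqref{cn}--\eqref{ps} is equivalent to \eqref{vlasov} for \emph{any} scaling function, I would write the exact modes $(C_n)_{n\in\NN}$ of $f$ with respect to the \emph{scheme's} $\alpha$, so that $\partial_t C_n+\mT_n[C]=\mS_n[C,E]$ holds with the very same $\alpha,\alpha'$ as in \eqref{dgcn}. Testing this against $\varphi_n=e_{\delta,n}:=\pi_h C_n-C_{\delta,n}\in X_h$, integrating over $I_j$, summing over $j$ — the numerical fluxes collapse to the exact ones and the jump/viscosity contributions vanish since $C_n$ is smooth — subtracting \eqref{dgcn}, multiplying by $\alpha$, summing over $n$, and adding the product-rule term $\alpha'\sum_n\|e_{\delta,n}\|^2_{L^2_x}$, one obtains, after using $\int_{I_j}(C_n-\pi_h C_n)\,\psi=0$ for $\psi\in\PP_k$ (which annihilates both the time-derivative and the interior transport contributions of the projection error $C_n-\pi_h C_n$), an evolution identity of the schematic form
\[
\frac{\dD}{\dD t}\|e_\delta(t)\|^2_{L^2(\dD\mu_t)}+\mathcal{D}[e_\delta]=\mathcal{S}[e_\delta]+\mathcal{I}[e_\delta]+\mathcal{R}_N[e_\delta],
\]
where $\mathcal{D}[e_\delta]=\sum_n\sum_j\nu_n[e_{\delta,n}]_{\jL}^2\ge0$ is the Lax--Friedrichs dissipation coming from the numerical flux \eqref{lfflx}, $\mathcal{S}$ collects all source contributions $\alpha\sum_n\int_\TT\big(\mS_n[C,E]-\mS_n[C_\delta,E_\delta]\big)e_{\delta,n}\,\dD x$ together with the product-rule term, $\mathcal{I}$ contains only the interface contributions of $C_n-\pi_h C_n$ through the numerical flux, and $\mathcal{R}_N$ is the velocity-truncation remainder — supported on $n=N-1$ and equal, up to its flux, to $-\tfrac{\sqrt N}{\alpha}\int_\TT\partial_x C_N\,e_{\delta,N-1}\,\dD x$.

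\emph{Energy estimate.} The crucial point is that $e_\delta$ lies in the same discrete space as $f_\delta$, so that exactly as in the proof of Proposition~\ref{prop:stab_L2dvdx} the ``diagonal'' part of $\mS_n[C,E]-\mS_n[C_\delta,E_\delta]$ — namely $E_\delta\,\alpha\sqrt n\,e_{\delta,n-1}$ and $\tfrac{\alpha'}{\alpha}(n\,e_{\delta,n}+\sqrt{(n-1)n}\,e_{\delta,n-2})$ — reassembles, up to a remainder bounded by $C_T\|e_\delta\|^2_{L^2(\dD\mu_t)}$, into the quadratic form $\tfrac12\iint_{\TT\times\RR}e_\delta^2\big(\alpha^2 E_\delta v+\alpha\alpha'|v|^2\big)\omega\,\dD x\,\dD v$, which by the defining relation $\alpha'=-\tfrac{\gamma}{2}\max(1,\|E_\delta\|_{L^\infty}^2)\alpha^3$ and Young's inequality with parameter $\gamma$ is $\le\tfrac1{2\gamma}\|e_\delta\|^2_{L^2(\dD\mu_t)}$. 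What is left in $\mathcal{S}$, $\mathcal{I}$, $\mathcal{R}_N$ are genuine consistency errors that must be shown to be \emph{uniform in $N$}: (i) the cross term $\sum_n\alpha^2\int_\TT(E-E_\delta)\sqrt n\,C_{n-1}\,e_{\delta,n}\,\dD x$, for which one uses $\partial_v\Psi_n=-\alpha\sqrt{n+1}\,\Psi_{n+1}$ to recognize $(\sqrt n\,C_{n-1})_n$ as, up to $\alpha$, the Hermite coefficients of $\partial_v f$, so that it is $\lesssim\|E-E_\delta\|_{L^\infty}\|\partial_v f\|_{L^2(\dD\mu_t)}\|e_\delta\|_{L^2(\dD\mu_t)}$ with $\|E-E_\delta\|_{L^\infty}\lesssim\|C_0-C_{\delta,0}\|_{L^2_x}\lesssim\|e_\delta\|_{L^2(\dD\mu_t)}+h^{k+1}$ from the one-dimensional Poisson problem \eqref{ps} (and the matching of the normalization constants $\rho_{0},\rho_{0,\delta}$); (ii) the interface term $\mathcal{I}$, which after a Cauchy--Schwarz in $n$ (again pairing $\sqrt n$ with $C_{n-1}-\pi_h C_{n-1}$), the trace bound $\sum_j[C_n-\pi_h C_n]_{\jL}^2\lesssim h^{2k+1}$, and Young's inequality is absorbed into half of $\mathcal{D}[e_\delta]$ plus an $\mathcal{O}(h^{2k+1})$ term; and (iii) the truncation remainder, $|\mathcal{R}_N[e_\delta]|\lesssim\sqrt N\,\|\partial_x C_N\|_{L^2_x}\|e_\delta\|_{L^2(\dD\mu_t)}$, where the spectral decay of the $N$-th Hermite coefficient $C_N$ of $\partial_x f$ (Section~\ref{sec:3}) gives $\sqrt N\,\|\partial_x C_N\|_{L^2_x}\lesssim N^{-(m-1)/2}$ — precisely the half-order loss visible in \eqref{res:main}. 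All weighted Sobolev norms of the smooth solution $f$ appearing here are bounded on $[0,T]$ and absorbed into $C_T$. Collecting everything and using Young once more one reaches
\[
\frac{\dD}{\dD t}\|e_\delta(t)\|^2_{L^2(\dD\mu_t)}\;\le\;C_T\,\|e_\delta(t)\|^2_{L^2(\dD\mu_t)}\;+\;C_T\big(h^{2k+1}+N^{-(m-1)}\big),
\]
and with the natural initialization $C_{\delta,n}(0)=\pi_h C_n(0)$ (so $e_\delta(0)=0$; otherwise $\|e_\delta(0)\|_{L^2(\dD\mu_0)}\lesssim h^{k+1}+N^{-m/2}$, which is of the right size), Gronwall's lemma yields $\|e_\delta(t)\|_{L^2(\dD\mu_t)}\le C_T\big(h^{k+1/2}+N^{-(m-1)/2}\big)$ on $[0,T]$. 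Adding the two approximation pieces of the first paragraph, and using $\omega\ge1$ for the first inequality in \eqref{res:main}, finishes the proof.

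\emph{Main obstacle.} I expect the hard part to be items (i)--(iii): controlling the consistency errors \emph{uniformly in $N$}. The naive bounds replace every factor $\sqrt n$ by $\sqrt N$ and produce a Gronwall constant of size $\sqrt N$, hence a useless factor $e^{\sqrt N\,T}$; one must instead recognize each $\sqrt n$-weighted combination as the discrete image of $v$ or $\partial_v$ acting on the \emph{smooth} solution $f$ — through $v\Psi_n=\tfrac1\alpha(\sqrt n\,\Psi_{n-1}+\sqrt{n+1}\,\Psi_{n+1})$ and $\partial_v\Psi_n=-\alpha\sqrt{n+1}\,\Psi_{n+1}$ — and bound it by a fixed weighted Sobolev norm of $f$ on $[0,T]$; the same identities, applied to $C_N$, turn the naive $\sqrt N\,N^{-m/2}$ of the truncation remainder into the stated $N^{-(m-1)/2}$. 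A secondary but unavoidable bookkeeping point is that the time-dependence of the basis $\Psi_n(t,v)$ forces the product-rule term $\alpha'\sum_n\|e_{\delta,n}\|^2_{L^2_x}$ and the $\tfrac{\alpha'}{\alpha}$-part of $\mS_n$ to combine into exactly the quadratic form handled in Proposition~\ref{prop:stab_L2dvdx}, with no $N$-dependent remainder — which once more relies on $e_\delta$ belonging to $V_N$ in the velocity variable.
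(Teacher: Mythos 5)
Your proposal follows essentially the same route as the paper: the same splitting into the global projection error and the discrete error $e_\delta$ (the paper's $\eta_\delta=\mP_{W_\delta}f-f_\delta$), a weighted $L^2(\dD\mu_t)$ energy/Gronwall estimate that reuses the mechanism of Proposition~\ref{prop:stab_L2dvdx} plus half of the Lax--Friedrichs dissipation to absorb the interface terms, the same treatment of the $E-E_\delta$ cross term via the one--dimensional Poisson equation and the identification of $(\sqrt n\,\hat C_{n-1})_n$ with the Hermite coefficients of $\partial_v f$, and the same mechanism (trading factors $\sqrt n$ for one derivative of $f$ through the recursions \eqref{vPsi}, hence $r=m-1$) producing the rates $N^{-(m-1)/2}$ and $h^{k+1/2}$. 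The only difference is bookkeeping: you isolate the velocity truncation as an explicit mode-$(N-1)$ remainder bounded by single-coefficient spectral decay, whereas the paper folds it into $\xi_\delta=f-\mP_{W_\delta}f$ and invokes Theorem~\ref{thm_errproj} applied to $\partial_t f$, $\partial_v f$ and $vf$ with $r=m-1$; both realizations lead to the stated estimate.
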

}

Before to present the proof of this result, let us make some
comments.

\begin{itemize}
\item On the one hand this result shows that the discretization in velocity
using Hermite polynomials provides  spectral accuracy. On the other
hand, we get the classical order of convergence of
the discontinuous Galerkin method for the space
discretization.

\item For \textcolor{black}{the} sake of clarity, we do not write explicitly how the
error bound \eqref{res:main} depends on the scaling parameter $\alpha$, nevertheless in the proof we will follow carefully this dependence.

%
\end{itemize}

\section{Preliminary results}
\setcounter{equation}{0}
\label{sec:3}

In the next section, we provide some results about the propagation of
regularity of the solution to the Vlasov-Poisson system \eqref{vlasov}.

\subsection{Propagation of the weighted Sobolev norms}

The global existence of classical $\CC^1$ solutions of the Vlasov-Poisson
system has been obtained by Ukai-Okabe \cite{Ukai} in the case of space
dimension two, and by Pfaffelmoser \cite{Pfaf} and Lions-Perthame \cite{LP}
independently in three dimensions. The key to obtain classical
solutions of the Vlasov-Poisson system is to prove that the
macroscopic (charge) density $\rho \in L^\infty([0, T ] \times \RR^d )$
for all $T > 0$.  Following Ukai \& Okabe who show the decay
of the distribution function $f$ in the $v$ variable  formulated in
terms of a convenient weighted estimate, we get \textcolor{black}{the} propagation of $\CC^{m}$
regularity for the  solution $(f,E$). \textcolor{black}{More precisely, according for example to the article of Ukai \& Okabe \cite{Ukai}, the following result holds.} 

Let $\beta \in  \CC^m(\RR)$ be such that
$$
\beta\geq 0, \quad \beta^\prime \leq 0, \quad{\rm and} \quad
\beta(r)=O\left(\frac{1}{r^\lambda}\right)\,,
$$
with $\lambda >1$.

\begin{thm}
For any $m\geq 1$, assume that $f_0$ is nonnegative such that $f_0\in \CC^{m-1}(\TT\times\RR)$ and for
all $0\leq k \leq m$,
$$
 \left( \,|\partial_x^k f_0| \,+\,
|\partial_v^k f_0|\,\right)(x,v)\,\leq\, \beta(|v|)\,.
$$
Then there exists a unique classical solution to the Vlasov-Poisson
system \eqref{vlasov} satisfying for any $0\leq k\leq m-1$
$$
\left( \,|\partial_x^k f| \,+\,
|\partial_v^k f|\,\right)(t,x,v) \,=\,
O\left({1}/{|v|^\lambda}\right), \qquad {\rm as}\, |v|\rightarrow \infty,   
$$
uniformly in $(t,x)\in[0,T]\times\TT$. Moreover  the electric
field $E$ satisfies for all $1\leq k \leq m$
$$
\left( \, |E| \,+\, |\partial_x^k E|\,\right)(t,x) \,\leq \, \mC_T.
$$
\label{th:1}
\end{thm}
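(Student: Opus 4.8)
The plan is to follow the classical Ukai--Okabe strategy: propagate a pointwise weighted bound on $f$ and its derivatives of the form $\beta(|v|)$, then bootstrap to control of the macroscopic density $\rho$ and the electric field $E$, and finally differentiate the characteristic ODE system to obtain bounds on $\partial_x^k f$, $\partial_v^k f$ and $\partial_x^k E$. First I would write the solution along characteristics: with $(X,V)(s;t,x,v)$ solving $\dot X = V$, $\dot V = E(s,X)$ with $(X,V)(t)=(x,v)$, one has $f(t,x,v)=f_0(X(0),V(0))$, so the assumed decay $f_0 \le \beta(|v|)$ immediately transfers to $f$ once one controls how $|V(0)|$ compares to $|v|$. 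Since in one dimension $\|E(t)\|_{L^\infty}$ is bounded a priori by a constant depending only on the mass of $f_0$ (the $L^1$ bound on $\rho$ controls $E$ via $\partial_x E = \rho - \rho_0$ on the torus, using also the zero-mean normalization), the velocity characteristic satisfies $|V(0)-v|\le T\,\mC_T$, and hence $\beta(|V(0)|)\le \mC_T\,\beta(|v|)$ because $\beta$ is nonincreasing and has polynomial decay of order $\lambda>1$ (a shift by a bounded amount changes $|v|^{-\lambda}$ only by a multiplicative constant for $|v|$ large, and is trivially controlled for $|v|$ bounded). This gives the $k=0$ weighted bound on $f$.

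Next I would set up the bootstrap for the derivatives. Differentiating the characteristic flow in the initial data gives a linear variational system for $\partial_{(x,v)}(X,V)$ whose coefficients involve $\partial_x E(s,X)$; by Gronwall, the Jacobian of the flow is bounded on $[0,T]$ once $\partial_x E \in L^\infty$. Then $\partial_x^k f$ and $\partial_v^k f$ are expressed, via the chain rule and Faà di Bruno, in terms of $\partial^j_{(x,v)} f_0$ evaluated at $(X(0),V(0))$ times products of derivatives of the flow of total order $\le k$; using the assumed bounds $|\partial_x^j f_0| + |\partial_v^j f_0| \le \beta(|v|)$ for $j\le m$ together with the flow bounds and the already-established comparison $\beta(|V(0)|)\le \mC_T\beta(|v|)$, one obtains $(|\partial_x^k f|+|\partial_v^k f|)(t,x,v) = O(|v|^{-\lambda})$ uniformly on $[0,T]\times\TT$, for $0\le k\le m-1$. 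To close the loop I would check that the bound on $\partial_x E(s,\cdot)$ needed at order $k$ follows from the bound on $\partial_x^{k-1}$-derivatives of $f$ at the previous step: since $\partial_x^k E = \partial_x^{k-1}\rho$ and $\partial_x^{k-1}\rho(t,x)=\int_\RR \partial_x^{k-1} f(t,x,v)\,dv$, the integrability provided by the $|v|^{-\lambda}$ decay with $\lambda>1$ makes this integral finite and bounded by $\mC_T$. This inductive interplay — regularity of $f$ at order $k$ feeds regularity of $E$ at order $k$, which feeds regularity of the flow, which feeds regularity of $f$ at order $k+1$ — is the structural heart of the argument; uniqueness of the classical solution is standard given these a priori bounds (Gronwall on the difference of two solutions along characteristics).

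The main obstacle I anticipate is the combinatorial/analytic bookkeeping in the induction: one must carefully track that at each order the weight $\beta(|v|)$ is reproduced (not degraded) under composition with the flow, and that the integration in $v$ to pass from $f$ to $\rho$ does not consume more decay than is available — this is exactly why the hypothesis $\lambda>1$ is imposed, and why only $m-1$ derivatives of $f$ (one less than assumed on $f_0$) are propagated. A secondary technical point is regularity in time: one should verify $f \in \CC^{m-1}$ jointly in $(t,x,v)$, which follows by using the Vlasov equation itself to trade a $t$-derivative for $v\partial_x + E\partial_v$ and invoking the spatial bounds already obtained. Since the statement explicitly attributes the result to Ukai--Okabe \cite{Ukai}, I would present the argument at the level of a guided sketch, citing \cite{Ukai} for the one-dimensional adaptation of their weighted-estimate method and emphasizing only the points specific to the periodic setting and to the normalization \eqref{bc00}.
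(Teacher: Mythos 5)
Your sketch is correct and is essentially the paper's own treatment: the paper proves nothing here, stating only that the result is due to Ukai--Okabe \cite{Ukai} in the two-dimensional case and carries over to the simpler one-dimensional periodic setting, which is precisely the characteristic/bootstrap argument (weighted decay along the flow, $L^\infty$ control of $E$ from the $L^1$ bound on $\rho$, then induction trading decay of $\partial_x^{k-1}f$ for bounds on $\partial_x^k E$ and on the flow derivatives) that you outline before deferring to the same reference. No discrepancy with the paper's approach.
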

\textcolor{black}{The proof of this result is done in the two dimensional case in \cite{Ukai}, but also applies in the simpler one dimensional~case.}

This latter result allows to obtain uniform estimates on the electric
field and its space derivative in $\CC^{m}([0,T]\times\TT)$, hence
we can propagate the weighted Sobolev norm on the solution to the
Vlasov-Poisson system \eqref{vlasov}
\begin{cor}
  Under the assumption of Theorem \ref{th:1} and assuming that
  $$
\| f_0\|_{\HH^m(\dD\mu_0)}  < \infty,
$$
we have that for any $t\in [0,T]$
$$
\|f(t) \|_{\HH^m(\dD\mu_t)}   \leq \| f_0\|_{\HH^m(\dD\mu_0)}   \,\exp(\mC\, t).
$$
\end{cor}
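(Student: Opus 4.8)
The plan is to differentiate the weighted Sobolev norm $\|f(t)\|_{\HH^m(\dD\mu_t)}^2 = \sum_{|\beta|\le m} \iint |\partial^\beta f|^2\, \omega(t,v)\,\dD x\,\dD v$ in time, and then estimate each contribution using the Vlasov equation. First I would fix a multi-index $\beta = (\beta_1,\beta_2)$ with $|\beta| = \beta_1 + \beta_2 \le m$ and apply $\partial^\beta = \partial_x^{\beta_1}\partial_v^{\beta_2}$ to the Vlasov equation \eqref{vlasov}. The transport part $v\,\partial_x f$ and the field part $E\,\partial_v f$ both produce, after the Leibniz rule, a leading term (the derivative of $f$ hit by $\partial^\beta$, times $v$ or $E$) plus lower-order commutator terms; the terms involving derivatives of $E$ are controlled uniformly on $[0,T]\times\TT$ by the $\CC^m$ bound $\| \partial_x^k E\|_{L^\infty} \le \mC_T$ from Theorem \ref{th:1}, while the term $\partial_v^{\beta_2}(v\,\partial_x\partial_v^{\beta_2-1}\cdots)$ produces at most one factor of $v$ times a derivative of $f$ of order $\le m$, which is exactly the kind of term the weighted norm controls.

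The second ingredient is the time derivative of the weight itself: $\partial_t \omega = \alpha\,\alpha'\,|v|^2\,\omega$. Since $\alpha' \le 0$ (because $\alpha$ is nonincreasing by \eqref{eq:def_alpha}), this contributes a term $\iint |\partial^\beta f|^2\,\alpha\,\alpha'\,|v|^2\,\omega \le 0$, which can be discarded — or, better, kept on the good side to absorb exactly the dangerous term $\iint |\partial^\beta f|^2\, v\,\partial_x(\cdots)\,\omega$ coming from the transport operator after integration by parts in $x$. Indeed, integrating $\iint \partial^\beta f\, \cdot\, v\,\partial_x\partial^\beta f\,\omega$ by parts in $x$ (the weight $\omega$ does not depend on $x$, and the torus has no boundary) gives zero for the top-order piece; the surviving pieces carry a single power of $v$ and one fewer $x$-derivative, so by Young's inequality $|v|\,|\partial^\alpha f|\,|\partial^{\alpha'} f| \le \frac12(\alpha^2|v|^2 |\partial^\alpha f|^2 + \alpha^{-2}|\partial^{\alpha'} f|^2)$ one bounds them by $C\,\|f\|_{\HH^m(\dD\mu_t)}^2$ provided $\alpha$ stays bounded away from $0$ and above — which is exactly the content of \eqref{control_alpha} in Proposition \ref{prop:bornes_alpha}. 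This is where the constant $\mC$ (depending on $T$ through $\underline\alpha_T$ and $\mC_T$) enters.

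Collecting all terms, one arrives at a differential inequality of the form
\begin{equation*}
\frac{\dD}{\dD t}\|f(t)\|_{\HH^m(\dD\mu_t)}^2 \,\le\, \mC\,\|f(t)\|_{\HH^m(\dD\mu_t)}^2 \,,
\end{equation*}
with $\mC = \mC(T)$ independent of $t\in[0,T]$, and Grönwall's lemma then yields $\|f(t)\|_{\HH^m(\dD\mu_t)} \le \|f_0\|_{\HH^m(\dD\mu_0)}\,\exp(\mC\,t)$, which is the claim (after relabelling $\mC/2 \rightsquigarrow \mC$). The hypothesis $\|f_0\|_{\HH^m(\dD\mu_0)} < \infty$ guarantees the right-hand side starts finite, and the pointwise decay $|\partial^\beta f|(t,x,v) = O(|v|^{-\lambda})$ with $\lambda>1$ from Theorem \ref{th:1}, while not by itself giving an $L^2(\omega)$ bound (the Gaussian weight $\omega$ grows too fast), is what justifies that all the integrations by parts and manipulations above are legitimate — one may first truncate in $v$, or work with the a priori assumption $f(t)\in\HH^m(\dD\mu_t)$ and close the estimate.

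The main obstacle is bookkeeping rather than conceptual: one must check that every commutator term generated by applying $\partial^\beta$ to $v\,\partial_x f + E\,\partial_v f$ either carries at most one power of $v$ (so it is absorbed by the weighted $\HH^m$ norm after a Young splitting) or involves only derivatives of $E$ that are uniformly bounded by $\mC_T$ on $[0,T]\times\TT$. The delicate point is the single $v$-weighted term from the transport operator: it is not controlled by $\|f\|_{\HH^m(\dD\mu_t)}^2$ on its own scale unless one uses the boundedness of $\alpha$ from above and below — so the lower bound $\underline\alpha_T > 0$ in Proposition \ref{prop:bornes_alpha} is essential and must be invoked explicitly. No step requires more than Young's inequality, integration by parts on the torus, and Grönwall, so the proof is short once the term-by-term accounting is organized.
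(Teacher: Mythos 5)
Your overall skeleton (differentiate the weighted $\HH^m$ norm, commute $\partial^\beta$ with the transport and field operators, use the $W^{m,\infty}$ bound on $E$ from Theorem \ref{th:1}, Gr\"onwall) is the same as the paper's intended argument, which simply says: propagate the $\HH^m(\dD\mu_t)$ norm exactly as the weighted $L^2(\dD\mu_t)$ norm is estimated in Proposition \ref{prop:1}. But there is a genuine gap in how you handle the single truly dangerous term, and you have misidentified where it comes from. The transport part is benign: for $\partial^\beta=\partial_x^{\beta_1}\partial_v^{\beta_2}$ one has $\partial^\beta(v\,\partial_x f)=v\,\partial_x\partial^\beta f+\beta_2\,\partial_x^{\beta_1+1}\partial_v^{\beta_2-1}f$; the top-order piece integrates to zero against $\partial^\beta f\,\omega$ (integration by parts in $x$, $\omega$ being $x$-independent), and the commutator carries \emph{no} factor of $v$ and has total order $|\beta|\le m$, so it is directly controlled by $\|f\|_{\HH^m(\dD\mu_t)}$ without any absorption. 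The dangerous term is instead produced by the field: integrating $\iint \partial^\beta f\; E\,\partial_v\partial^\beta f\;\omega$ by parts in $v$ yields $\tfrac{\alpha^2}{2}\iint E\,v\,|\partial^\beta f|^2\,\omega$, since $\partial_v\omega=\alpha^2 v\,\omega$. Your claim that such a term, carrying one power of $v$, is ``exactly the kind of term the weighted norm controls'' is false: after your Young splitting the piece $\alpha^4|v|^2|\partial^\beta f|^2\omega$ is \emph{not} dominated by $\|f\|^2_{\HH^m(\dD\mu_t)}$ (by Lemma \ref{lemme2} it would require one extra $v$-derivative, i.e.\ an $\HH^{m+1}$ quantity, for the top indices $|\beta|=m$), and no bound on $\alpha$ from above and below can fix this.

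The paper's mechanism, in Proposition \ref{prop:1}, is precisely designed for this term: the time derivative of the weight contributes $\alpha\alpha'|v|^2\omega$ with $\alpha'=-\tfrac{\gamma}{2}\max(1,\|E_\delta\|^2_{L^\infty})\,\alpha^3$, and one chooses the Young parameter $\eta$ with $\eta\,C/\gamma<1$ (where $\|E(t)\|^2_{L^\infty}\le C$) so that the total coefficient of $\iint |v|^2|\partial^\beta f|^2\omega$ is nonpositive and the whole $|v|^2$-weighted integral can be dropped. So the weight-derivative term can neither be ``discarded'' nor spent on the transport operator, as you propose; it must be kept to absorb the $E$-term, with the quantitative relation between $\alpha'$, $\gamma$ and $\|E\|_{L^\infty}$ invoked explicitly. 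With that correction (and the lower-order terms $\partial_x^j E\,\partial^{\beta'} f$, $j\ge 1$, handled by the $\mC_T$ bound of Theorem \ref{th:1} as you indicate), your differential inequality and the Gr\"onwall conclusion go through.
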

\begin{proof}
  Theorem \ref{th:1} ensures that the electric field is such that for
  all $t\in([0,T] $
  $$
\|E(t)\|_{W^{m,\infty}} \,\leq\, \mC_T\,.
$$
Then since the electric field is uniformly bounded, we  propagate \textcolor{black}{the}
$H^m(\dD\mu_t)$ norms of $f(t)$ as we estimate the weighted $L^2(\dD\mu_t)$ norm  in Proposition \ref{prop:1}.
\end{proof}

\subsection{Projection error of the Hermite decomposition}
In this section we present some approximation properties of the chosen
Hermite functions. Since the results presented here are very similar
to those proposed in \cite[Section 2]{Fok2001}, we only briefly
outline the proofs.

For any  integer $m\geq 1$, we define
\[\HH^m(\dw):=\left\{g:\RR\to\RR\, ;\, \partial^l_v g\in L^2(\dw),\, 0\leq l\leq m\right\},\]
with the following seminorm and norm:
\[|g|_{\HH^m(\dw)}=\left\|\partial^m_v g\right\|_{L^2(\dw)},\qquad \|g\|_{\HH^m(\dw)}=\left(\sum_{l=0}^m|g|_{\HH^l(\dw)}^2\right)^{{1}/{2}}.\]

Using \textcolor{black}{the} definition \eqref{hbasisf} of $\Psi_n$ and \textcolor{black}{the} properties of the Hermite polynomials $H_n$, one obtains that for all $n\geq 0$,
\beq
\label{vPsi}
\left\{\begin{array}{l}
         \ds\partial_t\Psi_n\,=\,-\frac{\alpha'}{\alpha}\left(n\,\Psi_n\,+\,\sqrt{(n+1)(n+2)}\,\Psi_{n+2}\right)\,,
         \\[1.1em]
\ds
         \alpha \,v\,\Psi_n\,=\,\sqrt{n+1}\,\Psi_{n+1}\,+\,\sqrt{n}\,\Psi_{n-1}\,,
         \\[1.1em]
\ds\partial_v\Psi_n\,=\, -\alpha\,\sqrt{n+1}\,\Psi_{n+1}\,. 
       \end{array}\right.
\eeq
Using the latter relation,  the set $(\partial_v\Psi_n)_n$ is also an orthogonal system, namely
\begin{equation}
  \label{proporthodv}
\left\langle\partial_v\Psi_n,\,\partial_v\Psi_m\right\rangle_{L^2(\dw)}\,=\,\alpha^3(t)\,(n+1)\,\delta_{n,m}\,.
\end{equation}
Therefore, any $g \in L^2(\dw)$ can be expanded as
\begin{equation}
  \label{decompo_u_v}
 g(v)\,=\,\sum_{n\in\NN}\hat{g}_n(t)\,\Psi_n(t,v),
 \end{equation}
 and the Hermite coefficients are given by
\begin{equation}\label{coeff_u}
\hat{g}_n(t)\,=\,\frac{1}{\alpha(t)} \,\left\langle g,\,\Psi_n(t)\right\rangle_{L^2(\dw)}.
\end{equation}
Remark also that the following equalities hold for every $g\in L^2(\dw)$:
\begin{equation}\label{expressions_norme}
\|g\|_{L^2(\dw)}^2\,=\,\alpha(t)\,\sum_{n\in\NN}
|\hat{g}_n(t)|^2\,=\frac{1}{\alpha(t)}\sum_{n\in\NN} \left|\left\langle g,\,\Psi_n(t)\right\rangle_{L^2(\dw)}\right|^2\,.
\end{equation}
Finally, we also introduce $\mP_{V_N}$ the orthogonal projection on $V_N$  such that we have  
\begin{equation*}
 \left\langle g\,-\,\mP_{V_N}g ,\,\varphi\right\rangle_{L^2(\dw)}\,=\,0,\qquad \forall \varphi\in V_{N},
 \end{equation*} 
 and
 \[
   \mP_{V_N}g(t,v) \,=\, \sum_{n=0}^{N-1}\hat{g}_{n}(t)\,\Psi_n(t,v)\,.
 \]
 Following \cite{Fok2001}, we now establish some inverse inequalities and imbedding inequalities which are needed to analyze the spectral convergence property for the here considered Hermite method.

\begin{lem}
For any $g \in V_{N}$, 
\begin{equation*}
|g|_{\HH^1(\dw)}\,\leq\, \alpha(t)\,\sqrt{N}\,\|g \|_{L^2(\dw)}.
\end{equation*}
\end{lem}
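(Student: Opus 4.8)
The plan is to expand $g\in V_N$ in the Hermite basis and use the explicit derivative formula from \eqref{vPsi} together with the orthogonality relations. Write $g(v)=\sum_{n=0}^{N-1}\hat g_n\,\Psi_n(t,v)$. Using the third relation in \eqref{vPsi}, $\partial_v\Psi_n=-\alpha\sqrt{n+1}\,\Psi_{n+1}$, we get
\[
\partial_v g\,=\,-\alpha(t)\sum_{n=0}^{N-1}\sqrt{n+1}\,\hat g_n\,\Psi_{n+1}\,.
\]
Since the shifted functions $\Psi_{n+1}$ for $0\le n\le N-1$ are still pairwise orthogonal with $\|\Psi_{n+1}\|_{L^2(\dw)}^2=\alpha(t)$, applying \eqref{expressions_norme}-type orthogonality yields
\[
|g|_{\HH^1(\dw)}^2\,=\,\|\partial_v g\|_{L^2(\dw)}^2\,=\,\alpha^2(t)\,\alpha(t)\sum_{n=0}^{N-1}(n+1)\,|\hat g_n|^2\,.
\]

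Now I would simply bound $n+1\le N$ for $0\le n\le N-1$, so that
\[
|g|_{\HH^1(\dw)}^2\,\leq\,\alpha^2(t)\,N\,\left(\alpha(t)\sum_{n=0}^{N-1}|\hat g_n|^2\right)\,=\,\alpha^2(t)\,N\,\|g\|_{L^2(\dw)}^2\,,
\]
where the last equality is again \eqref{expressions_norme}. Taking square roots gives the claimed inequality $|g|_{\HH^1(\dw)}\le \alpha(t)\sqrt N\,\|g\|_{L^2(\dw)}$.

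There is essentially no obstacle here: the result is a direct inverse (Bernstein-type) inequality, and the only thing to be careful about is correctly tracking the powers of $\alpha(t)$ that come from the factor $\alpha$ in $\partial_v\Psi_n$ and from the normalization $\|\Psi_n\|_{L^2(\dw)}^2=\alpha(t)$ in \eqref{proportho}. One could alternatively quote \eqref{proporthodv} directly, since $\langle\partial_v\Psi_n,\partial_v\Psi_m\rangle_{L^2(\dw)}=\alpha^3(t)(n+1)\delta_{n,m}$ already packages exactly the constant needed, making the computation a one-line application of Parseval followed by the bound $n+1\le N$.
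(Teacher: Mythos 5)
Your proof is correct and follows essentially the same route as the paper: expand $g$ in the basis $(\Psi_n)_{0\le n\le N-1}$, use the orthogonality of the derivatives (your explicit computation via $\partial_v\Psi_n=-\alpha\sqrt{n+1}\,\Psi_{n+1}$ is exactly how \eqref{proporthodv} is obtained, and the paper simply quotes that relation as you suggest in your closing remark), and bound $n+1\le N$. The tracking of the powers of $\alpha(t)$ matches the paper's computation, so nothing is missing.
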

\begin{proof}
Decomposing $g\in V_{N}$ as
\[
  g(t,v)\,=\,\sum_{n=0}^{N-1}\hat{g}_n(t)\,\Psi_n(t,v)\,,
\]
and using \textcolor{black}{the} orthogonality properties \eqref{proportho} and \eqref{proporthodv} yields
\begin{equation*}
| g |_{\HH^1(\dw)}^2\,=\,\alpha^3(t)\,\sum_{n=0}^{N-1}(n+1)\,|\hat{g}_n(t)|^2\,\leq \, \alpha^3(t)\,N\,\sum_{n=0}^{N-1}|\hat{g}_n(t)|^2\,=\,\alpha^2(t)\, N\,\|g\|_{L^2(\dw)}^2.
\end{equation*}
\end{proof}

Furthermore, we also show the following inequalities.
\begin{lem}
  \label{lemme2}
For any $g\in \HH^1(\dw)$, it holds
\beq
\label{lemme2_ineg1}
\left\{\begin{array}{l}
\ds  \|g\|_{L^2(\dw)}\,\leq\, \frac{2}{\alpha(t)}\,|g|_{\HH^1(\dw)}\,,
                    \\[1.1em]
\ds \|v\,g\|_{L^2(\dw)}\,\leq\, \frac{2}{\alpha^2(t)}\,|g|_{\HH^1(\dw)}\,.
         \end{array}\right.
       \eeq
\end{lem}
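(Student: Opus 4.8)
\emph{Proof plan.} The plan is to work entirely on the Hermite side. First I would expand $g=\sum_{n\in\NN}\hat g_n\,\Psi_n$ as in \eqref{decompo_u_v}--\eqref{coeff_u} and use the orthogonality relations \eqref{proportho} and \eqref{proporthodv}, together with the algebraic identities \eqref{vPsi}, to rewrite each of the three quantities $\|g\|_{L^2(\dw)}$, $|g|_{\HH^1(\dw)}$ and $\|v\,g\|_{L^2(\dw)}$ as a weighted $\ell^2$ sum of the coefficients $(\hat g_n)_n$. Concretely, \eqref{expressions_norme} gives $\|g\|_{L^2(\dw)}^2=\alpha\sum_n|\hat g_n|^2$, while differentiating the expansion and using the third identity in \eqref{vPsi} yields $\partial_v g=-\alpha\sum_n\sqrt{n+1}\,\hat g_n\,\Psi_{n+1}$, hence $|g|_{\HH^1(\dw)}^2=\alpha^3\sum_n(n+1)\,|\hat g_n|^2$ by \eqref{proporthodv}.

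With these two formulas the first inequality is immediate: since $1\le n+1$ for all $n\ge0$,
\[
\|g\|_{L^2(\dw)}^2\,=\,\alpha\sum_{n\in\NN}|\hat g_n|^2\,\le\,\alpha\sum_{n\in\NN}(n+1)\,|\hat g_n|^2\,=\,\frac1{\alpha^2}\,|g|_{\HH^1(\dw)}^2,
\]
which is in fact sharper than the stated bound (constant $1$ in place of $2$). For the second inequality I would use the three-term recurrence $\alpha\,v\,\Psi_n=\sqrt{n+1}\,\Psi_{n+1}+\sqrt n\,\Psi_{n-1}$ from \eqref{vPsi} to compute the Hermite coefficients of $v\,g$, namely $v\,g=\frac1\alpha\sum_n\big(\sqrt n\,\hat g_{n-1}+\sqrt{n+1}\,\hat g_{n+1}\big)\Psi_n$; then \eqref{proportho} gives $\|v\,g\|_{L^2(\dw)}^2=\frac1\alpha\sum_n|\sqrt n\,\hat g_{n-1}+\sqrt{n+1}\,\hat g_{n+1}|^2$. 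Applying $(a+b)^2\le2a^2+2b^2$ and re-indexing the two resulting sums, each of which is then bounded by $\sum_n(n+1)|\hat g_n|^2$, leads to $\|v\,g\|_{L^2(\dw)}^2\le\frac4\alpha\sum_n(n+1)|\hat g_n|^2=\frac4{\alpha^4}\,|g|_{\HH^1(\dw)}^2$, which is exactly the claimed estimate.

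The main obstacle — and the reason the proof is only briefly outlined by reference to \cite{Fok2001} — is justifying these manipulations when $g$ is merely in $\HH^1(\dw)$ rather than in the finite-dimensional space $V_N$: one must check that the Hermite series of $g$ can be differentiated term by term in $L^2(\dw)$, and that $v\,g$ indeed has the coefficients used above. I would handle this by integration by parts, computing for each $n$
\[
\langle\partial_v g,\Psi_n\rangle_{L^2(\dw)}\,=\,-\int_\RR g\,\partial_v(\Psi_n\,\omega)\,\dD v\,=\,-\alpha^2\sqrt n\int_\RR g\,H_{n-1}(\alpha v)\,\dD v\,=\,-\alpha^2\sqrt n\,\hat g_{n-1},
\]
the boundary term vanishing because membership of $g$ and $\partial_v g$ in the weighted space forces the necessary decay at infinity; this identifies the Hermite coefficients of $\partial_v g$, shows that $\sum_n(n+1)|\hat g_n|^2<\infty$, and thereby makes the series rearrangements legitimate. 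Alternatively, one proves the two inequalities first for smooth compactly supported $g$, where every step is classical, and then passes to the limit using density together with the a priori bounds just obtained. Once this point is settled, the remaining computations are the elementary ones sketched above.
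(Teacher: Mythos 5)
Your proof is correct, but it follows a genuinely different route from the paper's. The paper argues directly in physical variables: it uses the identity $\partial_v\omega = \alpha^2\,v\,\omega$ to write $\int (vg)^2\omega\,\dD v$ as $\alpha^{-2}\int v\,g^2\,\partial_v\omega\,\dD v$, integrates by parts once, and applies Cauchy--Schwarz to obtain the single inequality $\|vg\|_{L^2(\dw)}^2+\alpha^{-2}\|g\|_{L^2(\dw)}^2\le 2\,\alpha^{-2}\,\|vg\|_{L^2(\dw)}\,|g|_{\HH^1(\dw)}$, from which both estimates follow by a short bootstrap; no Hermite expansion, completeness of the basis, or term-by-term differentiation is needed. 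Your spectral argument instead reads both sides off the coefficients $(\hat g_n)$ via \eqref{proportho}, \eqref{proporthodv} and \eqref{vPsi}; it buys a sharper constant ($\|g\|_{L^2(\dw)}\le\alpha^{-1}|g|_{\HH^1(\dw)}$) and makes the link with the eigenvalue structure of the Liouville problem \eqref{Liouville} explicit, but at the cost of the justification you yourself flag: one must identify the coefficients of $\partial_v g$ for a general $g\in\HH^1(\dw)$ (your integration by parts $\langle\partial_v g,\Psi_n\rangle_{L^2(\dw)}=-\alpha^2\sqrt n\,\hat g_{n-1}$ does this, with the boundary term handled e.g.\ along subsequences where $g^2\omega\to0$), and one must be a little careful with the Parseval identity for $v\,g$, which is not known a priori to lie in $L^2(\dw)$ --- the cleanest fix is the truncation/density-plus-Fatou argument you sketch, first proving the bound for $g\in V_N$ (or smooth compactly supported $g$) and then passing to the limit. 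With those points made precise, your computation of $\|vg\|_{L^2(\dw)}^2=\alpha^{-1}\sum_n|\sqrt n\,\hat g_{n-1}+\sqrt{n+1}\,\hat g_{n+1}|^2$ and the bound $(a+b)^2\le 2a^2+2b^2$ give exactly the stated constant $2/\alpha^2$, so the result matches the paper's.
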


\textcolor{black}{
\begin{proof}
Using that $\partial_v\omega(t,v)\,=\,v\,\alpha^2(t)\,\omega(t,v)$, we have
\[\int_{\RR}(v\,g(v))^2\,\omega(t,v)\,\dD v\,=\,\frac{1}{\alpha^2(t)}\int_{\RR}v\,g(v)^2\,\partial_v\omega(t,v)\,\dD v.\]
By an integration by parts, one gets
\begin{equation*}
\int_{\RR}(v\,g(v))^2\,\omega(t,v)\,\dD v\,=\, -\frac{1}{\alpha^2(t)}\int_{\RR}g(v)^2\,\omega(t,v)\,\dD v-\frac{2}{\alpha^2(t)}\int_{\RR}v\,g(v)\,\partial_vg(v)\,\omega(t,v)\,\dD v.
\end{equation*}
Then, applying the Cauchy-Schwarz inequality to the second term of the right hand side, it yields
\begin{equation}\label{estim_vu2}
\|v\,g\|_{L^2(\omega(t)\,\dD v)}^2+\frac{1}{\alpha^2(t)}\,\|g\|_{L^2(\omega(t)\,\dD v)}^2\,\leq\, \frac{2}{\alpha^2(t)}\,\|v\,g\|_{L^2(\omega(t)\,\dD v)}\,|g|_{H^1(\omega(t)\,\dD v)},
\end{equation}
from which we deduce that
\[ \|v\,g\|_{L^2(\omega(t)\,\dD v)}\,\leq\, \frac{2}{\alpha^2(t)}\,|g|_{H^1(\omega(t)\,\dD v)}.\]
Now, using this later estimate in \eqref{estim_vu2}, we obtain that
\[\|g\|_{L^2(\omega(t)\,\dD v)}^2\,\leq\, \frac{4}{\alpha^2(t)}\,|g|_{H^1(\omega(t)\,\dD v)}^2,\]
which concludes the proof.
\end{proof}
}

Still following \cite{Fok2001}, let us define the Fokker-Planck operator $\mF$ as
\begin{equation}\label{operatorA}
\mF[g](v)\,=\,-\partial_v\left(\omega^{-1}(t)\,\partial_v\left(g\,\omega(t)\right)\right)\,,
\end{equation}
where $\omega(t)$ is the weight defined in \eqref{eq:def_omega}. It follows from Lemma \ref{lemme2} that $\mF$ is a continuous mapping from $\HH^2(\dw)$ to $L^2(\dw)$, with $\|\mF\|$ independent of $t$, as stated in the following lemma.

\begin{lem}
  \label{lem_Acont}
For all $g\in \HH^2(\dw)$, it holds
\begin{equation}\label{norm_opA}
\|\mF[g]\|_{L^2(\dw)}\,\leq\, 7\,|g|_{\HH^2(\dw)}.
\end{equation}
\end{lem}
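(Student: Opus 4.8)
The plan is to compute $\mF[g]$ explicitly in terms of $v$, $\partial_v g$ and $\partial_v^2 g$ using the special structure of the Gaussian weight $\omega(t,v) = \sqrt{2\pi}\,e^{\alpha^2(t)|v|^2/2}$, and then bound each resulting term by $|g|_{\HH^2(\dw)}$ using the inequalities of Lemma \ref{lemme2}. Since $\partial_v\omega = \alpha^2 v\,\omega$, we have $\partial_v(g\,\omega) = (\partial_v g + \alpha^2 v\,g)\,\omega$, hence $\omega^{-1}\partial_v(g\,\omega) = \partial_v g + \alpha^2 v\,g$. Differentiating once more gives
\[
\mF[g] \,=\, -\partial_v\!\left(\partial_v g + \alpha^2 v\,g\right) \,=\, -\partial_v^2 g \,-\, \alpha^2\,g \,-\, \alpha^2\,v\,\partial_v g\,.
\]
So the whole problem reduces to estimating $\|\partial_v^2 g\|_{L^2(\dw)}$, $\alpha^2\|g\|_{L^2(\dw)}$ and $\alpha^2\|v\,\partial_v g\|_{L^2(\dw)}$.

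Next I would apply the triangle inequality and handle the three pieces. The first term is exactly $|g|_{\HH^2(\dw)}$, contributing a coefficient $1$. For the second term, Lemma \ref{lemme2} applied twice (first to $g$, then to $\partial_v g$) gives $\|g\|_{L^2(\dw)} \leq \frac{2}{\alpha}|g|_{\HH^1(\dw)} = \frac{2}{\alpha}\|\partial_v g\|_{L^2(\dw)} \leq \frac{2}{\alpha}\cdot\frac{2}{\alpha}|\partial_v g|_{\HH^1(\dw)} = \frac{4}{\alpha^2}|g|_{\HH^2(\dw)}$, so $\alpha^2\|g\|_{L^2(\dw)} \leq 4\,|g|_{\HH^2(\dw)}$, contributing a coefficient $4$. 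For the third term, I would apply the second inequality of Lemma \ref{lemme2} to the function $\partial_v g \in \HH^1(\dw)$: $\|v\,\partial_v g\|_{L^2(\dw)} \leq \frac{2}{\alpha^2}|\partial_v g|_{\HH^1(\dw)} = \frac{2}{\alpha^2}\|\partial_v^2 g\|_{L^2(\dw)} = \frac{2}{\alpha^2}|g|_{\HH^2(\dw)}$, so $\alpha^2\|v\,\partial_v g\|_{L^2(\dw)} \leq 2\,|g|_{\HH^2(\dw)}$, contributing a coefficient $2$. Summing $1 + 4 + 2 = 7$ yields $\|\mF[g]\|_{L^2(\dw)} \leq 7\,|g|_{\HH^2(\dw)}$, as claimed, and the bound is manifestly independent of $t$ since all factors of $\alpha(t)$ cancel.

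There is no real obstacle here; the only point requiring a little care is the justification of the two integrations by parts implicit in the computation of $\mF[g]$ and in the proof of Lemma \ref{lemme2} — the boundary terms at $v \to \pm\infty$ vanish because $g \in \HH^2(\dw)$ forces sufficient decay of $g$, $\partial_v g$ and their products against the exponentially growing weight (equivalently, against the Gaussian factor $e^{-\alpha^2 v^2/2}$ after unfolding the weight). Since the paper explicitly says the proofs follow \cite{Fok2001} and are only briefly outlined, I would simply remark that these manipulations are legitimate for $g$ in a dense subspace (e.g. $V_N$, or finite Hermite expansions) and extend to all of $\HH^2(\dw)$ by density, the constant $7$ being independent of the approximation. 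This makes the estimate \eqref{norm_opA} rigorous without grinding through the decay estimates explicitly.
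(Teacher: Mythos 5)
Your proof is correct and follows essentially the same route as the paper: you compute $\mF[g]=-\partial_v^2 g-\alpha^2 v\,\partial_v g-\alpha^2 g$, then use the triangle inequality, the second inequality of Lemma \ref{lemme2} applied to $\partial_v g$ (coefficient $2$), and the first inequality applied twice (coefficient $4$), giving $1+2+4=7$ exactly as in the paper. Your extra remarks on justifying the integrations by parts by density are a harmless supplement to the paper's brief argument.
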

\begin{proof}
Using the definition \eqref{operatorA}, we obtain after some computations that for all $g\in \HH^2(\dw)$,
\[
  \mF[g] (v)\,=\,-\partial^2_vg \,-\,\alpha^2(t)\,v\,\partial_v
  g\,-\,\alpha^2(t)\,g.
\]
Then, applying \textcolor{black}{the} triangle inequality together with the second inequality
of Lemma \ref{lemme2} on the second term and twice the first
inequality of Lemma \ref{lemme2} on the third term, we obtain the expected estimate.
\end{proof}

Moreover, $\Psi_n$ is the $n$-th eigenfunction of the following singular Liouville problem:
\begin{equation}
  \label{Liouville}
-\mF[g](v)\,+\,\lambda\,g(v)\,=\,0, \qquad v\in\RR,
\end{equation}
with corresponding eigenvalues $\lambda_n\,=\, \alpha^2(t)\,n$.

\begin{prop}
  \label{prop_interpolation_v}
Let $r\geq 0$. For any $g\in \HH^r(\dw)$, it holds for all $N\geq 0$
\begin{equation}
  \label{estim_interpolation}
\|\,g-\mP_{V_N}g\,\|_{L^2(\dw)}\,\leq\, \frac{\mC}{\left(\alpha^2(t)\,N\,\right)^{r/2}}\,\|g\|_{\HH^r(\dw)}\,,
\end{equation}
with $\mC>0$ independent of $N$ and $t$.
\end{prop}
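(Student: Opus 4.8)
The natural approach is to expand $g$ in the Hermite basis, $g(v)=\sum_{n\in\NN}\hat g_n\Psi_n(t,v)$, so that $g-\mP_{V_N}g=\sum_{n\geq N}\hat g_n\Psi_n$ and, by the orthogonality relation \eqref{proportho} and \eqref{expressions_norme},
\[
\|g-\mP_{V_N}g\|_{L^2(\dw)}^2\,=\,\alpha(t)\sum_{n\geq N}|\hat g_n(t)|^2.
\]
I would then exploit the fact, recorded just before the statement, that each $\Psi_n$ is the $n$-th eigenfunction of the singular Liouville problem \eqref{Liouville} with eigenvalue $\lambda_n=\alpha^2(t)\,n$, i.e.\ $\mF[\Psi_n]=\alpha^2(t)\,n\,\Psi_n$. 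Since $\mF$ is self-adjoint on $L^2(\dw)$ (this is the point of the weighted space: $\mF$ is the Fokker–Planck operator in self-adjoint form), applying $\mF$ term by term gives $\widehat{\mF^s[g]}_n=(\alpha^2(t)\,n)^s\,\hat g_n$ for any integer $s$ with $\mF^s[g]\in L^2(\dw)$, hence
\[
\|\mF^s[g]\|_{L^2(\dw)}^2\,=\,\alpha(t)\sum_{n\in\NN}(\alpha^2(t)\,n)^{2s}\,|\hat g_n(t)|^2.
\]

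Combining the two displays, for the tail $n\geq N$ one bounds $1\leq (n/N)^{2s}=(\alpha^2(t)\,n)^{2s}/(\alpha^2(t)\,N)^{2s}$, so
\[
\|g-\mP_{V_N}g\|_{L^2(\dw)}^2\,\leq\,\frac{1}{(\alpha^2(t)\,N)^{2s}}\,\alpha(t)\sum_{n\geq N}(\alpha^2(t)\,n)^{2s}|\hat g_n|^2\,\leq\,\frac{\|\mF^s[g]\|_{L^2(\dw)}^2}{(\alpha^2(t)\,N)^{2s}}.
\]
Finally I would control $\|\mF^s[g]\|_{L^2(\dw)}$ by $\|g\|_{\HH^{2s}(\dw)}$ by iterating Lemma \ref{lem_Acont}: one application gives $\|\mF[g]\|_{L^2(\dw)}\leq 7\,|g|_{\HH^2(\dw)}$, and since $\mF[g]=-\partial_v^2 g-\alpha^2 v\,\partial_v g-\alpha^2 g$ one checks that $\mF$ maps $\HH^{m}(\dw)$ into $\HH^{m-2}(\dw)$ with norm bounded independently of $t$ (again via Lemma \ref{lemme2} to absorb the $v\,\partial_v$ term), so that $\|\mF^s[g]\|_{L^2(\dw)}\leq \mC\,\|g\|_{\HH^{2s}(\dw)}$. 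This proves the estimate for even integers $r=2s$; the general real $r\geq 0$ case follows by a standard interpolation argument between the integer exponents (K-method or simply interpolating the linear operator $\mathrm{Id}-\mP_{V_N}$ between $\HH^{2s}$ and $\HH^{2s+2}$), absorbing all $t$-dependence into the factor $(\alpha^2(t)N)^{-r/2}$ as claimed.

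The main obstacle is the bookkeeping needed to make the operator-power argument rigorous: one must justify that the Hermite series of $\mF^s[g]$ really is obtained by multiplying coefficients by $(\alpha^2 n)^s$ (i.e.\ that $\mF$ is essentially self-adjoint with $(\Psi_n)$ a complete orthogonal eigenbasis, and that convergence of the series is strong enough), and one must verify that each application of $\mF$ costs exactly two derivatives in the weighted Sobolev scale with a $t$-uniform constant — this is where Lemma \ref{lemme2} and Lemma \ref{lem_Acont} do the real work, handling the unbounded coefficient $\alpha^2 v\,\partial_v g$. The interpolation step for non-integer $r$ is routine once the endpoint (even-integer) estimates are in hand, so I would state it briefly and refer to \cite{Fok2001}.
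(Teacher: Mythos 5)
Your treatment of the even-integer case is essentially the paper's own argument: the paper also expands $g$ in the basis $(\Psi_n)$, uses the eigenfunction identity \eqref{Liouville} ($\mF[\Psi_n]=\alpha^2(t)\,n\,\Psi_n$) and two integrations by parts --- which is precisely the self-adjointness of $\mF$ you invoke --- to obtain $\hat g_n=\alpha^{-1}(t)\,(\alpha^2(t)\,n)^{-r/2}\int_\RR\mF^{r/2}[g]\,\Psi_n\,\dw$, and then bounds the tail by $(\alpha^2(t)N)^{-r}\,\|\mF^{r/2}[g]\|^2_{L^2(\dw)}\leq \mC\,(\alpha^2(t)N)^{-r}\,\|g\|^2_{\HH^r(\dw)}$ via the (iterated) Lemma \ref{lem_Acont}, with Lemma \ref{lemme2} absorbing the unbounded coefficient $\alpha^2 v\,\partial_v$ with a $t$-uniform constant, exactly as in your plan. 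Where you diverge is the odd case (which is genuinely needed later, e.g.\ with $r=m-1$): the paper does not interpolate. It applies the even-case identity with $r-1$, performs one more integration by parts, and uses the relations \eqref{vPsi} (in particular $\omega^{-1}\partial_v(\Psi_n\,\omega)=\alpha\sqrt n\,\Psi_{n-1}$) together with Lemma \ref{lemme2} to convert the remaining half power of the eigenvalue into one weighted derivative of $\mF^{(r-1)/2}[g]$, keeping explicit track of the powers of $\alpha(t)$ throughout.

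The interpolation shortcut is the soft spot of your proposal. The scale $\HH^m(\dw)$ is defined by pure $v$-derivatives against a time-dependent, growing Gaussian weight; to interpolate $\mathrm{Id}-\mP_{V_N}$ between $\HH^{2s}(\dw)$ and $\HH^{2s+2}(\dw)$ and conclude for $\HH^{2s+1}(\dw)$ you must prove that $\HH^{2s+1}(\dw)$ embeds into the corresponding interpolation space with a constant independent of $t$ (equivalently, of $\alpha(t)$), and the powers of $\alpha$ attached to each seminorm do not scale homogeneously, so this is not automatic; in fact such an embedding is essentially equivalent to the half-step estimate the paper establishes by hand. So your even-case argument is complete (modulo writing out the iteration of Lemma \ref{lem_Acont}, which does cost exactly two derivatives per application with a $t$-uniform constant), but for odd $r$ you should either carry out the direct computation as the paper does or actually justify the interpolation of this weighted, $t$-dependent scale rather than declaring it routine.
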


\begin{proof}
Throughout this proof, let $\mC$ be a generic positive constant
independent of $N$ and $t$, which may be different in different
places.  Using the orthogonal relation \eqref{proportho} and the definition of the orthogonal projection $\mP_{V_N}$, we have
\begin{equation*}
\|\,g-\mP_{V_N} g\,\|_{L^2(\dw)}^2\,=\,\alpha(t)\,\sum_{n\geq N}|\hat{g}_n(t)|^2\,.
\end{equation*}
Let us first treat the case where $r$ is an even integer. By the singular Liouville equation \eqref{Liouville}, we get
\begin{equation*}
\int_{\RR}g\,\Psi_n\,\dw \,=\,\frac{1}{\alpha^2(t)\,n}\int_{\RR}g\,\mF[\Psi_n]\,\dw\,.
\end{equation*}
Then, using \textcolor{black}{the} definition of the Fokker-Planck operator $\mF$ in \eqref{operatorA} and performing two successive integrations by part\textcolor{black}{s}, it holds
\begin{eqnarray*}
\int_{\RR}g\,\Psi_n\,\dw
  &=&\frac{1}{\alpha^2(t)\,n}\int_{\RR} \omega^{-1}(t)\,\partial_v\left(\Psi_n\,\omega(t)\right)\,\partial_vg\,\dw\,,
  \\[1.1em]
&=& -\frac{1}{\alpha^2(t)\,n}\,\int_{\RR}\Psi_n
    \,\partial_v\left( \omega^{-1}(t)\,\partial_v\left(g\,\omega(t)\right)\right)\,\dw\,,
  \\[1.1em]
&=& \frac{1}{\alpha^2(t)\,n}\,\int_{\RR}\Psi_n\,\mF[g]\,\dw\,.
\end{eqnarray*}
Then by induction, we deduce that
\begin{equation}
  \label{estim1_thm1}
\int_{\RR} g\,\Psi_n\,\dw \,=\, \frac{1}{\left(\alpha^2(t)\,n\right)^{r/2}}\,\int_{\RR}\mF^{r/2}[g]\,\Psi_n\,\dw\,.
\end{equation}
Consequently, using \eqref{coeff_u}, it yields
\begin{equation}
  \label{estim2_thm1}
|\hat{g}_n(t)|=\frac{1}{\alpha(t)}\,\frac{1}{\left(\alpha^2(t)\,n\right)^{{r}/{2}}}\,\left|\int_{\RR}\mF^{{r}/{2}}[g]\,\Psi_n\,\dw\,\right|\,.
\end{equation}
Furthermore, using \eqref{expressions_norme} and Lemma \ref{lem_Acont}, we get
\begin{eqnarray*}
\|\;g-\mP_{V_N} g\|_{L^2(\dw)}^2&=& \alpha(t)\sum_{n\geq N}
                                  \frac{1}{\alpha^2(t)}\frac{1}{\left(\alpha^2(t)\,n\right)^{r}}\,\left|\int_{\RR}\mF^{{r}/{2}}[g]\,\Psi_n\,\dw\,\right|^2\,,
  \\[1.1em]
&\leq&
       \frac{1}{\left(\alpha^2(t)\,N\right)^{r}}\,\|\mF^{{r}/{2}}[g]\|_{L^2(\dw)}^2\,,
  \\[1.1em]
&\leq& \frac{\mC}{\left(\alpha^2(t)\,N\right)^{r}}\,\|g\|_{\HH^r(\dw)}^2\,.
\end{eqnarray*}
Now, let $r$ be any odd integer. Applying  \eqref{estim1_thm1} for $r-1$ (which is now even), using \textcolor{black}{the} Liouville equation \eqref{Liouville} and an integration by parts, we have
\begin{eqnarray*}
\int_{\RR}g\,\Psi_n\,\dw&=&
                            \frac{1}{\left(\alpha^2(t)\,n\right)^{{(r-1)}/{2}}}\,\int_{\RR}\mF^{{(r-1)}/{2}}[g]\,\Psi_n\,\dw
  \\[1.1em]
&=&
    \frac{1}{\left(\alpha^2(t)\,n\right)^{{(r+1)}/{2}}}\,\int_{\RR}\partial_v\left(\mF^{{(r-1)}{2}}[g]\,\omega(t)\right)\,\partial_v\left(\Psi_n\,\omega(t)\right)\omega^{-1}(t)\,\dD v.
    \label{estim3_thm1}
\end{eqnarray*}
On the one hand, by virtue of the two last  relations in \eqref{vPsi}, we obtain
\[
  \omega^{-1}(t)\,\partial_v\left(\Psi_n\,\omega(t)\right)\,=\,\alpha(t)\,\sqrt{n}\,\Psi_{n-1}\,.
\]
On the other hand, we remark that
\[
  \omega^{-1}(t)\,\partial_v\left(\mF^{(r-1)/2}[g]\,\omega(t)\right)\,=\,\partial_v\mF^{(r-1)/2}[g]\,+\,\alpha^2(t)\,v\,\mF^{(r-1)/2}[g]\,.
\]
Using these latter results, it yields that
\begin{equation*}
|\hat{g}_n|=\frac{1}{\alpha(t)}\,\frac{1}{\left(\alpha^2(t)\,n\right)^{r/2}}\,\left|\int_{\RR}\left(\partial_v\mF^{(r-1)/2}[g]\,+\,\alpha^2(t)\,v\,\mF^{(r-1)/2}[g]\right)\,\Psi_{n-1}\,\,\dw\,\right|.
\end{equation*}
Then, proceeding as above and using \eqref{lemme2_ineg1}, we get
\begin{eqnarray*}
\|g-\mP_{V_N} g\|_{L^2(\dw)}^2&\leq&
                                     \frac{\mC}{\left(\alpha^2(t)\,N\right)^{r}}\,\left[ \left\|\partial_v\mF^{(r-1)/2}[g]\right\|_{L^2(\dw)}^2\,+\,\alpha^4(t)\left\|v\,\mF^{{(r-1)}/{2}}[g]\right\|_{L^2(\dw)}^2\right]\,
  \\
&\leq& \frac{\mC}{\left(\alpha^2(t)\,N\right)^{r}}\, \left\|\partial_v
       \mF^{{(r-1)}/{2}}[g]\right\|_{L^2(\dw)}^2\,
  \\
&\leq & \frac{\mC}{\left(\alpha^2(t)\,N\right)^{r}}\, \|g\|_{\HH^r(\dw)}^2\,,
\end{eqnarray*}
which concludes the proof.
\end{proof}

\subsection{Projection error of the discontinuous Galerkin methods}

Now, let us recall a classical result concerning the spatial
approximation (see for example \textcolor{black}{\cite[Lemma 2.1]{johnson1986})}.

\begin{prop}
  \label{prop_interpolation_x}
There exists a constant $\bar{\mC}>0$, independent of $h$, such that for any $g\in \HH^{k+1}(\dD x)$, the following inequality holds:
\begin{equation}\label{estim_interpolation_x}
\|g-\mP_{X_h} g \|_{L^2}^2\,+\,h\,\|g-\mP_{X_h}g
\|_{L^2(\Gamma)}^2 \,\leq\, \bar{\mC}\,h^{2(k+1)}\,\|g\|^2_{\HH^{k+1}},
\end{equation}
where $\|\cdot\|_{L^2(\Gamma)}$ is the norm over the mesh skeleton $\Gamma\,=\,\left(x_{j+{1}/{2}}\right)_{j\in\hat\mJ}$ defined by
\beq
  \label{def_normGamma}
  \|u\|_{L^2(\Gamma)}^2\,=\,\sum_{j\in\hat\mJ}\left(|u_{j-{1}/{2}}^+|^2+|u_{j-{1}/{2}}^-|^2\right)\,.
\eeq
\end{prop}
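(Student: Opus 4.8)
Set $e:=g-\mP_{X_h}g$. The plan is to exploit the locality of the discontinuous Galerkin projection together with an affine scaling to a reference element, on which the estimate reduces to the Bramble--Hilbert lemma and a trace inequality. Because functions in $X_h$ are allowed to jump across the mesh points $\{\xR\}_{j\in\hat\mJ}$, one has the orthogonal decomposition $X_h=\bigoplus_{j\in\mJ}\PP_k(I_j)$ in $L^2(\TT)$, so that $\mP_{X_h}$ acts element by element: $\left(\mP_{X_h}g\right)\big|_{I_j}$ is the $L^2(I_j)$-orthogonal projection of $g\big|_{I_j}$ onto $\PP_k(I_j)$. Moreover, up to the periodic identification of the edges, $\|e\|_{L^2(\Gamma)}^2=\sum_{j\in\mJ}\left(|e(\xL^+)|^2+|e(\xR^-)|^2\right)$, each element contributing exactly its two endpoint traces. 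It therefore suffices to prove, on each $I_j=[\xL,\xR]$ of length $h_j$, the local bounds
\begin{equation}
\|e\|_{L^2(I_j)}^2\,\leq\,\mC\,h_j^{2(k+1)}\,|g|_{\HH^{k+1}(I_j)}^2, \label{pf:loc1}
\end{equation}
\begin{equation}
|e(\xL^+)|^2+|e(\xR^-)|^2\,\leq\,\mC\,h_j^{2(k+1)-1}\,|g|_{\HH^{k+1}(I_j)}^2, \label{pf:loc2}
\end{equation}
and then to sum over $j\in\mJ$.

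First I would map $I_j$ to the reference interval $\hat I=(0,1)$ by $x=\xL+h_j\hat x$, setting $\hat g(\hat x)=g(\xL+h_j\hat x)$ and letting $\hat\mP$ be the $L^2(\hat I)$-projection onto $\PP_k(\hat I)$ and $\hat e:=\hat g-\hat\mP\hat g$. Since $\PP_k$ is affine invariant, the projection commutes with the scaling, while the identities $\|g\|_{L^2(I_j)}^2=h_j\|\hat g\|_{L^2(\hat I)}^2$, $\;|g|_{\HH^{k+1}(I_j)}^2=h_j^{1-2(k+1)}|\hat g|_{\HH^{k+1}(\hat I)}^2$, and the scale invariance of point values record how the relevant quantities transform. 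On $\hat I$ the key estimate is
\begin{equation}
\|\hat e\|_{\HH^1(\hat I)}\,\leq\,\mC\,|\hat g|_{\HH^{k+1}(\hat I)}. \label{pf:ref}
\end{equation}
For the $L^2$ component this is immediate from $\|\mathrm{Id}-\hat\mP\|_{L^2\to L^2}\leq 1$ and the Bramble--Hilbert lemma: since $\hat\mP$ reproduces $\PP_k(\hat I)$, for any $p\in\PP_k(\hat I)$ one has $\hat e=(\mathrm{Id}-\hat\mP)(\hat g-p)$, hence $\|\hat e\|_{L^2(\hat I)}\leq\inf_{p\in\PP_k}\|\hat g-p\|_{L^2(\hat I)}\leq\mC\,|\hat g|_{\HH^{k+1}(\hat I)}$. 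For the $\HH^1$ component I would again write $\hat e=(\hat g-p)-\hat\mP(\hat g-p)$ and control the projected remainder through the inverse inequality on the finite-dimensional space $\PP_k(\hat I)$, namely $\|\hat\mP(\hat g-p)\|_{\HH^1(\hat I)}\leq\mC\,\|\hat\mP(\hat g-p)\|_{L^2(\hat I)}\leq\mC\,\|\hat g-p\|_{L^2(\hat I)}$; taking the infimum over $p$ and invoking Bramble--Hilbert once more gives \eqref{pf:ref}.

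The two local bounds then follow by combining \eqref{pf:ref} with the trace inequality $|\hat v(0)|^2+|\hat v(1)|^2\leq\mC\,\|\hat v\|_{\HH^1(\hat I)}^2$ on $\hat I$ and scaling back: \eqref{pf:loc1} reads $\|e\|_{L^2(I_j)}^2=h_j\|\hat e\|_{L^2(\hat I)}^2\leq\mC\,h_j\,|\hat g|_{\HH^{k+1}(\hat I)}^2=\mC\,h_j^{2(k+1)}|g|_{\HH^{k+1}(I_j)}^2$, while \eqref{pf:loc2} uses the scale invariance of point values, $|e(\xL^+)|^2+|e(\xR^-)|^2\leq\mC\,\|\hat e\|_{\HH^1(\hat I)}^2\leq\mC\,|\hat g|_{\HH^{k+1}(\hat I)}^2=\mC\,h_j^{2(k+1)-1}|g|_{\HH^{k+1}(I_j)}^2$. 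Summing \eqref{pf:loc1} over $j\in\mJ$ and bounding $h_j\leq h$ controls the first term of \eqref{estim_interpolation_x} by $\mC\,h^{2(k+1)}\|g\|_{\HH^{k+1}}^2$; summing \eqref{pf:loc2}, multiplying by the prefactor $h$, and using $h_j\leq h$ to raise the element exponent $2(k+1)-1$ to the global $2(k+1)$ controls the second term, which completes the proof.

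I expect the genuine difficulty to lie in the $\HH^1$-stability \eqref{pf:ref} of the projection \emph{error}, which is needed for the trace term \eqref{pf:loc2}: this is not a consequence of the $L^2$-optimality of $\hat\mP$ alone, and the mechanism that makes it work is precisely the inverse inequality on $\PP_k(\hat I)$, which converts the $\HH^1$ norm of the projected remainder into its $L^2$ norm. The only other delicate point is the power-counting in the trace estimate, where the single global factor $h$ in front of $\|\cdot\|_{L^2(\Gamma)}^2$ must absorb the half-power deficit $h_j^{-1}$ coming from the trace inequality; the bound $h_j\leq h$ closes this gap.
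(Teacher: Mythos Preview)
The paper does not actually prove this proposition: it is stated as a classical result with a reference to \cite[Lemma 2.1]{johnson1986} and no argument is given. Your proof is correct and is precisely the standard scaling argument that underlies that reference --- localize via the elementwise nature of $\mP_{X_h}$, pull back to a reference interval, combine Bramble--Hilbert with an inverse inequality on $\PP_k$ to get $\HH^1$-control of the error, and use the trace inequality to handle the skeleton term. The power counting in your last paragraph is exactly right, and your remark that the only nontrivial step is the $\HH^1$-stability of the projection error (needed for the trace bound) is on target.
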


\subsection{Global projection error}
In this section, we provide a global projection error on the
combination of Hermite polynomial approximations
and local discontinuous Galerkin interpolation in the suitable
functional space. For any $t\geq 0$, we introduce $W_{\delta}$, the subspace of $L^2(\dD\mu_t)$ defined by
$$
W_{\delta} \,:=\, V_N\otimes X_h,
$$
where $V_N$ is given by \eqref{def_VN} and $X_h$ is defined in
\eqref{def:Xh}, and  $\mP_{W_\delta}$ the orthogonal projection on
$W_{\delta}$. For $f\in L^2(\dD\mu_t)$ which can be expanded as
\begin{equation} \label{decompo_f}
f(x,v)=\sum_{n\in\NN}\hat{C}_n(t,x)\,\Psi_n(t,v),
\end{equation}
the projection $\mP_{W_\delta}f$ is then given by
\begin{equation}
  \label{decompo_fN}
\mP_{W_\delta} f\,=\,\sum_{n=0}^{N-1}\mP_{X_h}\hat{C}_n(t,x)\,\Psi_n(t,v),
\end{equation}
where $\mP_{X_h}$ is the $L^2(\dD x)$-orthogonal projection on
$X_h$. We prove the following result.

\begin{thm}
  \label{thm_errproj}
 For any $t\geq 0$, we consider  $g(t) \in
 H^r(\dD\mu_t)$, with $r\geq k+1$. Then we have
 \begin{itemize}
\item[$(i)$] the global projection error for all $t\in [0,T]$,
   \begin{equation*}
\|g(t)-\mP_{W_\delta} g(t)\|_{L^2(\dD\mu_t)}\,\leq\, \mC \,\left(
  \frac{1}{(\alpha^2(t) N)^{r/2}}\,+\,h^{k+1}\right)\,\|g(t)\|_{\HH^{r}(\dD\mu_t)}\,,
\end{equation*}
\item[$(ii)$] the global projection error on the fluxes for all $t\in [0,T]$,
  \begin{equation*}
\left(\int_\RR \|(g-\mP_{W_\delta}g)(t,\cdot,v)\|_{L^2(\Gamma)}^2\,\dw\right)^{1/2} \leq \mC \left(
  \frac{1}{(\alpha^2(t) N)^{r/2}}+ h^{k+1/2}\right)\,\|g(t)\|_{\HH^{r}(\dD\mu_t)}\,.
\end{equation*}
\end{itemize}
\end{thm}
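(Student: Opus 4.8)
The plan is to decompose the total error into a Hermite truncation part and a spatial discontinuous Galerkin part by writing
\[
g-\mP_{W_\delta}g \,=\, \underbrace{\left(g-\mP_{V_N}g\right)}_{(A)}\,+\,\underbrace{\left(\mP_{V_N}g-\mP_{W_\delta}g\right)}_{(B)},
\]
where $\mP_{V_N}$ acts only in velocity. For term $(A)$, I integrate the pointwise-in-$x$ estimate of Proposition~\ref{prop_interpolation_v} over $\TT$: since $\mP_{V_N}$ commutes with $\partial_x$, one gets $\|g-\mP_{V_N}g\|_{L^2(\dD\mu_t)}\leq \mC\,(\alpha^2(t)N)^{-r/2}\,\|g\|_{\HH^r_v(\dD\mu_t)}$, and the latter is bounded by $\|g\|_{\HH^r(\dD\mu_t)}$. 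For term $(B)$, using the expansion \eqref{decompo_f} and the formula \eqref{decompo_fN}, one has $\mP_{V_N}g-\mP_{W_\delta}g=\sum_{n=0}^{N-1}(I-\mP_{X_h})\hat C_n(t,x)\,\Psi_n(t,v)$; by the orthogonality \eqref{proportho} of the $\Psi_n$ in $L^2(\dw)$,
\[
\|\mP_{V_N}g-\mP_{W_\delta}g\|_{L^2(\dD\mu_t)}^2 \,=\, \alpha(t)\sum_{n=0}^{N-1}\|(I-\mP_{X_h})\hat C_n(t)\|_{L^2(\dD x)}^2 \,\leq\, \bar{\mC}\,h^{2(k+1)}\,\alpha(t)\sum_{n=0}^{N-1}\|\hat C_n(t)\|_{\HH^{k+1}(\dD x)}^2,
\]
invoking Proposition~\ref{prop_interpolation_x} termwise. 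The final step is to recognize the sum on the right as (a piece of) $\|g\|_{\HH^{k+1}(\dD\mu_t)}^2$: since $\partial_x^l$ commutes with the Hermite decomposition, $\alpha(t)\sum_n\|\partial_x^l\hat C_n(t)\|_{L^2(\dD x)}^2=\|\partial_x^l g(t)\|_{L^2(\dD\mu_t)}^2$ by \eqref{expressions_norme} applied slicewise, and summing over $0\leq l\leq k+1$ gives $\leq \|g\|_{\HH^r(\dD\mu_t)}^2$ because $r\geq k+1$. Combining $(A)$ and $(B)$ yields $(i)$.

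For $(ii)$, the structure is the same but the spatial norm is the skeleton norm $\|\cdot\|_{L^2(\Gamma)}$ from \eqref{def_normGamma}. I again split $g-\mP_{W_\delta}g=(g-\mP_{V_N}g)+(\mP_{V_N}g-\mP_{W_\delta}g)$. For the Hermite part, $g-\mP_{V_N}g=\sum_{n\geq N}\hat C_n(t,x)\Psi_n(t,v)$ still lies in the span of $(\Psi_n)_n$, so for fixed $v$ its trace values are $\sum_{n\geq N}\hat C_n(t,x^\pm_{\jR})\Psi_n(t,v)$; integrating $\|(g-\mP_{V_N}g)(t,\cdot,v)\|_{L^2(\Gamma)}^2$ against $\dw$ and using orthogonality gives $\alpha(t)\sum_{n\geq N}\|\hat C_n(t)\|_{L^2(\Gamma)}^2$. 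This is not directly covered by Proposition~\ref{prop_interpolation_v}; the cleanest route is to note that $g-\mP_{V_N}g=(I-\mP_{V_N})g$ and that $(I-\mP_{V_N})$ also projects away modes $\geq N$ of $\partial_x g$, hence $\|\hat C_n(t)\|_{L^2(\Gamma)}^2\lesssim h^{-1}\|\hat C_n(t)\|_{L^2(\dD x)}^2 + h\|\partial_x\hat C_n(t)\|_{L^2(\dD x)}^2$ by a trace inequality on each $I_j$ — actually it is simpler to bound $\|\cdot\|_{L^2(\Gamma)}$ slicewise by an $\HH^1(\dD x)$ norm via the continuous trace inequality, then sum the Hermite series and apply Proposition~\ref{prop_interpolation_v} with the extra $\partial_x$ absorbed into $\|g\|_{\HH^r(\dD\mu_t)}$, noting the $(\alpha^2 N)^{-r/2}$ rate is unaffected. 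For the DG part, $\mP_{V_N}g-\mP_{W_\delta}g=\sum_{n=0}^{N-1}(I-\mP_{X_h})\hat C_n(t,x)\Psi_n(t,v)$; orthogonality in $v$ plus the skeleton bound $h\|(I-\mP_{X_h})\hat C_n\|_{L^2(\Gamma)}^2\leq \bar{\mC}h^{2(k+1)}\|\hat C_n\|_{\HH^{k+1}}^2$ from Proposition~\ref{prop_interpolation_x} gives the $h^{k+1/2}$ rate (the half-power loss coming from dividing by $h$), and summing over $n$ and reconstructing $\|g\|_{\HH^r(\dD\mu_t)}$ as before finishes $(ii)$.

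The main obstacle is the bookkeeping in term $(B)$: one must be careful that $\mP_{X_h}$ acting on $\hat C_n(t,x)$ truly coincides with $\mP_{W_\delta}$ restricted to the $n$-th Hermite mode — this is exactly the content of \eqref{decompo_fN}, and relies on the tensor-product structure $W_\delta=V_N\otimes X_h$ together with the fact that $\mP_{V_N}$ and $\mP_{X_h}$ act on different variables and therefore commute. Once that commutation and the two slicewise identities ``$\|\cdot\|_{L^2(\dD\mu_t)}^2=\alpha(t)\sum_n\|\hat C_n\|_{L^2(\dD x)}^2$'' and its $\partial_x^l$-version are in place, the rest is assembling the two rates and tracking that the constant $\mC$ depends only on $\bar{\mC}$, the constant from Proposition~\ref{prop_interpolation_v}, and $k,r$ — all independent of $\delta$, $h$, $N$ and (after Proposition~\ref{prop:bornes_alpha}) controllable in $t\in[0,T]$. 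A secondary technical point is the trace inequality needed for the Hermite part of $(ii)$: since $g-\mP_{V_N}g$ need not be piecewise polynomial in $x$, one uses the standard continuous trace inequality $\|w\|_{L^2(\Gamma)}^2\leq \mC(h^{-1}\|w\|_{L^2(\dD x)}^2+h\|\partial_x w\|_{L^2(\dD x)}^2)$ slicewise before summing the Hermite series, which costs one extra $x$-derivative and is harmless since $r\geq k+1\geq 1$.
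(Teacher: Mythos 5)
Your proof of part $(i)$ is, step for step, the paper's own argument: the decomposition $g-\mP_{W_\delta}g=\sum_{n\geq N}\hat C_n\Psi_n+\sum_{n=0}^{N-1}(\hat C_n-\mP_{X_h}\hat C_n)\Psi_n$, Proposition~\ref{prop_interpolation_v} for the Hermite tail, Proposition~\ref{prop_interpolation_x} applied mode by mode together with the orthogonality \eqref{proportho} and the slicewise identity \eqref{expressions_norme} for the spatial part, and $r\geq k+1$ to merge the two rates; the tensor-product commutation you worry about is exactly \eqref{decompo_fN}, so that bookkeeping is fine. The same is true of the discontinuous Galerkin half of part $(ii)$, where the $h^{k+1/2}$ rate comes from the skeleton estimate in \eqref{estim_interpolation_x}, as in the paper.

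The one place you deviate is the skeleton norm of the Hermite tail in $(ii)$. The paper applies Proposition~\ref{prop_interpolation_v} pointwise in $x$ and then controls the point values on $\Gamma$ through the embedding $\HH^r(\TT)\subset L^\infty(\TT)$, i.e.\ a trace argument in $x$; you instead invoke the additive trace inequality $\|w\|_{L^2(\Gamma)}^2\lesssim h^{-1}\|w\|_{L^2(\dD x)}^2+h\|\partial_x w\|_{L^2(\dD x)}^2$ and then assert that ``the $(\alpha^2 N)^{-r/2}$ rate is unaffected.'' As written this does not follow: the $h^{-1}\|w\|_{L^2}^2$ term turns your Hermite contribution into $h^{-1/2}(\alpha^2(t)N)^{-r/2}$, which is not dominated by $(\alpha^2(t)N)^{-r/2}+h^{k+1/2}$ uniformly in $h$ and $N$, so the statement of $(ii)$ is not reached by the inequality you quote. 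To avoid the explicit $h^{-1/2}$ you would need, e.g., the multiplicative trace bound $\|w\|^2_{L^2(\Gamma)}\lesssim \|w\|_{L^2(\dD x)}\|w\|_{\HH^1(\dD x)}$ combined with the fact that $\partial_x$ commutes with $I-\mP_{V_N}$ (this still costs a small power of $\alpha^2 N$), or else follow the paper's pointwise-in-$x$ application of Proposition~\ref{prop_interpolation_v} followed by the Sobolev embedding; in either case the step deserves an explicit estimate rather than the claim that the rate is untouched. Apart from this point -- where, to be fair, the paper itself is quite terse -- your route coincides with the paper's.
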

\begin{proof}
To prove $(i)$,  we  first write $g(t)\in L^2(\dD\mu_t)$ as
 \begin{equation}
 g(t,x,v)\,=\,\sum_{n\in\NN}\hat{C}_n(t,x)\,\Psi_n(t,v),
 \end{equation}
where the modes $(\hat{C}_n)_{n\in\NN}$ are computed from $g(t)$
using the orthogonality property \eqref{proportho}. Hence, we  get 
\beq
\label{def:PW}
  g(t)-\mP_{W_\delta} g(t) \,=\,\sum_{n\geq N} \hat{C}_{n}\Psi_n
  \,+\,
  \sum_{n=0}^{N-1}\left(\hat{C}_{n}-\mP_{X_h}\hat{C}_n\right)\,\Psi_n.
  \eeq
On the one hand, observing that the first term of the right hand side is nothing else
than $g(t)-\mP_{V_N}g(t)$, it  can be estimated thanks to Proposition
\ref{prop_interpolation_v} 
$$
\|\,\sum_{n\geq N} \hat{C}_{n}\Psi_n \,\|_{L^2(\dD\mu_t)} \,\leq \, \frac{\mC}{\left(\alpha^2(t)\,N\,\right)^{r/2}}\,\|g\|_{\HH^r(\dD\mu_t)}\,.
$$
On the other hand, the second term can be controlled by applying
Proposition \ref{prop_interpolation_x} with $g=\hat{C}_n$, for $0\leq
n \leq N-1$, which yields
\begin{eqnarray*}
\|\,\sum_{n=0}^{N-1}\left(\hat{C}_{n}-\mP_{X_h}\hat{C}_n\right)\,\Psi_n\,\|_{L^2(\dD\mu_t)}^2
   &\leq& \bar{\mC}\,h^{2(k+1)}\,\alpha(t)\,
          \sum_{n=0}^{N-1} \|\,\hat{C}_{n}\,\|^2_{\HH^{k+1}(\dD x)}\,,
  \\
  &\leq& \bar{\mC}\,h^{2(k+1)}\,
          \|\, g\,\|^2_{\HH^{k+1}(\dD\mu_t)}\,.
\end{eqnarray*}
Gathering the latter results and using that $r\geq k+1$, we prove that there exists a constant
$\mC>0$, independent of the discretization parameter $\delta\,=\,(h,1/N)$,
such that  for all $t\in [0,T]$
  \begin{equation*}
\|g(t)-\mP_{W_\delta} g(t)\|_{L^2(\dD\mu_t)}\,\leq\, \mC \,\left(
  \frac{1}{(\alpha^2(t) N)^{r/2}}\,+\,h^{k+1}\right)\,\|g(t)\|_{\HH^{r}(\dD\mu_t)}\,.
\end{equation*}
The second estimate in $(ii)$ is obtained using the same
  ideas, together with the definition \eqref{def_normGamma} of the
  norm over the mesh skeleton and the classical trace theorem on
  Sobolev spaces. Indeed, from \eqref{def:PW} and Proposition
  \ref{prop_interpolation_v}, we obtain  that for all $x\in\TT$,
\begin{eqnarray*}
 \int_\RR |g(t,x,v)-\mP_{W_\delta} g(t,x,v)|^2\dw
 &\leq &
  \frac{\mC}{\left(\alpha^2(t)\,N\,\right)^{r}}\,\|g(t,x,.)\|_{\HH^{r}(\dw)}^2\\
  &+&  \alpha(t)\,\sum_{n=0}^{N-1} |\hat{C}_{n}-\mP_{X_h}\hat{C}_n|^2(t,x) \,.
\end{eqnarray*}
Furthermore, since $H^r(\TT)\subset L^\infty(\TT)$ for $r\geq 1$ and thanks to
Proposition \ref{prop_interpolation_x} on the mesh skeleton $\Gamma$, it yields
\begin{equation*}
  \int_\RR \|(g-\mP_{W_\delta} g)(t,.,v)\|^2_{L^2(\Gamma)}\,\dw
    \leq
\mC\,\left( \frac{1}{\left(\alpha^2(t)\,N\,\right)^{r}}\,+\,h^{2k+1}\right)\,\|g(t)\|_{\HH^r(\dD\mu_t)}^2,
\end{equation*}
from which we deduce the second item.
\end{proof}

%
%

\section{Proof of Theorem \ref{th:main}}
\setcounter{equation}{0}
\label{sec:4}

From the previous stability
analysis and global projection error,  we are now ready to prove our main result on the convergence of the
numerical solution $f_\delta$  given by \eqref{dgcn}-\eqref{dgfseries}
to the solution $f$ to the Vlasov-Poisson system \eqref{vlasov}.  By \textcolor{black}{the} triangle inequality, we have
\begin{equation}
  \label{ineg_trian}
 \|f(t)-f_{\delta}(t)\|_{L^2(\dD\mu_t)}\leq
 \|f(t)-\mP_{W_\delta}f(t)\|_{L^2(\dD\mu_t)}\,+\,\|\mP_{W_\delta} f(t)-f_{\delta}(t)\|_{L^2(\dD\mu_t)}\,,
 \end{equation}
 where the first term on the right hand side is the projection error,
 which  has already been estimated in Theorem \ref{thm_errproj},
 whereas  the second one is the consistency error,  which will be
 treated by considering its time derivative and using stability arguments together with interpolation properties.


We define the consi\textcolor{black}{s}tency error
$\mB$ for a given $C=(C_n)_{n\in\NN}$ and \textcolor{black}{a} smooth test function
$\varphi$ as
\begin{equation}
\mB_{n}(C,E,\varphi):=\int_\TT\partial_t C_n\,\varphi\,\dD
x\,+\,\sum_{j\in\mJ}\mA_{n,j}(g_n(C),\varphi) - \int_\TT\mS_n[C,E]\,\varphi \,\dD x\,,
\label{defBnh}
\end{equation}
where $(\mA_{n,j},g_n(C))$ are given in  \eqref{anh}  and $\mS_n[C,E]$ in \eqref{cn}.

On the one hand, from the modes $C_\delta=(C_{\delta,n})_{0\leq n< N}$
corresponding to  $f_{\delta}$ satisfy\textcolor{black}{ing} \eqref{dgcn}, we construct
$\bar{C}_\delta=(\bar{C}_{\delta,n})_{n\in\NN}$ as
$$
\bar{C}_{\delta,n} = \left\{
  \begin{array}{ll}
    C_{\delta,n},  & {\rm if }\quad  0 \leq n \leq N-1\,,
    \\
    0,  & {\rm else,} 
    \end{array}\right.
$$
which satisfies for any $n\in\NN$
\begin{equation}
  \label{FVapproxcompact}
\mB_{n}(\bar{C}_\delta,E_{\delta},\varphi_n)\,=\,0,\qquad\forall \varphi_n\in X_{h},
\end{equation}
where $E_{\delta}$ is  solution to \eqref{approxPoisson}.

On the other hand, by consistency of the numerical flux \eqref{lfflx},
the first modes $\hat{C}=(\hat{C}_n)_{n\in\NN}$ corresponding to
the exact continuous solution $f$ of \eqref{vlasov} satisfies for all $n\in\NN$
\begin{equation}
  \label{FVexactcompact}
\mB_{n}(\hat{C},E,\varphi_n)\,=\,0,\qquad\forall \varphi_n\in
X_{h}, 
\end{equation}
with $E$ given by the second equation (Poisson equation) of
\eqref{vlasov}. Then we introduce, for any $t\geq 0$,  $\eta_\delta(t) \in W_\delta$
as
\begin{equation}
  \label{def_eta}
\eta_\delta(t,x,v) \,:=\, \mP_{W_\delta}f(t,x,v)\,-\,f_{\delta}(t,x,v)\,=\,\sum_{n\in\NN}\eta_{\delta,n}(t,x)\,\Psi_n(t,v),
\end{equation}
with
$$
\eta_{\delta,n}\,=\,
\mP_{X_h}\hat{C}_n\,\mathbf{1}_{\{n<N\}}-\bar{C}_{\delta,n}, \quad n\in\NN,
$$
and $\xi_\delta(t) \in  L^2(\dD\mu_t)$ as
\begin{equation}\label{def_xi}
\xi_{\delta}(t,x,v) \,:=\, f(t,x,v)\,-\,\mP_{W_\delta} f(t,x,v)\,=\,\sum_{n\in\NN}\xi_{\delta,n}(t,x)\,\Psi_{n}(t,v),
\end{equation}
with
$$
  \xi_{\delta,n}(t,x) \,:=\,\hat{C}_n-\mP_{X_h}\hat{C}_n\,\mathbf{1}_{\{n<N\}}, \quad n\in\NN.
$$
Since $\eta_{\delta,n}\in X_h$, by taking $\varphi_n=\eta_{\delta,n}$ in \eqref{FVapproxcompact} and \eqref{FVexactcompact} and substrating the two equalities, we get
\begin{equation}
  \label{eqBhn}
\mB_{n}(\eta_\delta,E_{\delta},\eta_{\delta,n})\,=\, -\mB_{n}(\xi_\delta,E_{\delta},\eta_{\delta,n})\,+\,\alpha\,\sqrt{n}\int_\TT(E-E_{\delta})\,\hat{C}_{n-1}\,\eta_{\delta,n}\,\dD x.
\end{equation}
Now, the aim is to estimate the consistency error defined for any
$t\geq 0$ by
$$
  \|\mP_{W_\delta}f(t)-f_{\delta}(t)\|_{L^2(\dD\mu_t)}^2\,=\,\sum_{n=0}^{N-1}\alpha(t)\,\int_\TT|\,\eta_{\delta,n}|^2\dD
    x\,.
$$
To do this, we compute the time derivative of
$\|\mP_{W_\delta}f(t)-f_{\delta}(t)\|^2_{L^2(\dD\mu_t)}$ given by
$$
\frac{1}{2}\,\frac{\dD }{\dD t}\left[\sum_{n=0}^{N-1}\alpha(t)\,\int_\TT
 |\,\eta_{\delta,n}(t)\,|^2\dD
  x\right] \,=\,\sum_{n=0}^{N-1}\int_\TT\left[\alpha(t) \,\eta_{\delta,n}\,\partial_t\eta_{\delta,n}\,+\,\frac{1}{2}\,\alpha^\prime(t)\,|\,\eta_{\delta,n}|^2\right]\,\dD
x\,,
$$
hence using \eqref{eqBhn} together with the
  definition \eqref{defBnh} of $\mB_{n}$, we  obtain
  \begin{equation}
  \label{deb_consis}
\frac{1}{2}\,\frac{\dD }{\dD t}\left[\sum_{n=0}^{N-1}\alpha(t)\,\int_\TT
 |\,\eta_{\delta,n}(t)\,|^2\dD
  x\right] \,=\,
\mI_1 + \mI_2 + \mI_3,
\end{equation}
where
\begin{equation}
  \label{def:I}
  \left\{
    \begin{array}{l}
 \ds\mI_1 \,:=\, -\sum_{n=0}^{N-1}\alpha(t)\, \left( \sum_{j\in\mJ}\mA_{n,j}(g_n(\eta_\delta),\eta_{\delta,n})
      \,-\,\int_\TT\mS_n[\eta_\delta,E_\delta]\,\eta_{\delta,n}\dD x-\frac{1}{2}\frac{\alpha'(t)}{\alpha(t)}\int_\TT|\eta_{\delta,n}|^2\,\dD x\right),
   \\[1.1em]
 \ds \mI_2\,:=\,
      \sum_{n=0}^{N-1}\alpha^2(t)\,\sqrt{n}\int_\TT (E-E_{\delta})\,\hat{C}_{n-1}\,\eta_{\delta,n}\,\dD x,
      \\[1.1em]
 \ds \mI_3\,:=\,  -\sum_{n=0}^{N-1}\alpha(t)\,\mB_{n}(\xi_\delta,E_{\delta}, \eta_{\delta,n}).
    \end{array}\right.
  \end{equation}

Let us now estimate each of these terms separately. Throughout the
following computations,  $\mC$ will be a generic positive constant, depending on the $L^2(\dD\mu_t)$ of the exact solution $f$ and its derivatives, but independent of $\delta\,=\,(h,1/N)$, and which may be different in different places.


We proceed as in the stability analysis detailed in \cite[Proposition
3.2]{BCF2021} or in Proposition \ref{prop:stab_L2dvdx}, replacing $C_{\delta}$ by $\eta_{\delta}$. We get 
\begin{equation}
  \label{estim:I1}
\mI_1 \,\leq\,
-\frac{1}{2}\sum_{n=0}^{N-1}\sum_{j\in\hat\mJ}\nu_n\,[\eta_{\delta,n}]^2_{j-{1}/{2}}\,+\,\frac{1}{4\,\gamma}\|\eta_{\delta}(t)\|_{L^2(\dD\mu_t)}^{2}.
\end{equation}
For the second term $\mI_2$, we apply the Cauchy-Schwarz inequality
and  write
\begin{equation*}
 |\mI_2|\,\leq\, \|E-E_{\delta}\|_{L^{\infty}(\dD
  x)}\left(\sum_{n=0}^{N-1}\alpha^3(t)\,n\int_\TT |\hat{C}_{n-1}|^2\,\dD x\right)^{{1}/{2}}\left(\sum_{n=0}^{N-1}\alpha(t)\int_\TT|\eta_{\delta,n}|^2\dD x\right)^{{1}/{2}}\,.
\end{equation*}
On the one hand, to estimate $\|E-E_{\delta}\|_{L^{\infty}(\dD x)}$, we proceed as in the proof of \cite[Proposition 2.3]{BCF2021}. By \textcolor{black}{the} Sobolev and Poincaré-Wirtinger inequalities, there exists a constant $\mC>0$ such that 
\[
  \|E-E_{\delta}\|_{L^{\infty}}^2\leq
  \mC\,\|\partial_x(E-E_{\delta})\|^2_{L^2(\dD x)}.
\]
Substrating \eqref{approxPoisson} and the second equation of \eqref{vlasov}, taking $\partial_x(E-E_{\delta})$ as test function, and applying \textcolor{black}{the} Cauchy-Schwarz inequality, it yields
\begin{equation*}
 \|\partial_x(E-E_{\delta})\|_{L^2}^2\,\leq\, \|\partial_x(E-E_{\delta})\|_{L^2} \,\|\hat{C}_0-C_{\delta,0}\|_{L^2},
 \end{equation*}
 and then
$$
   \|E-E_{\delta}\|_{L^{\infty}}^2\,\leq\, \mC
                                    \,\|\hat{C}_0-C_{\delta,0}\|_{L^2}^2
   \,\leq\, \frac{\mC}{\alpha(t)}\,\|f(t)-f_{\delta}(t)\|_{L^2(\dD\mu_t)}^2\,.
 $$
On the other hand,  we remark that using \textcolor{black}{the} decomposition
\eqref{decompo_f} of $f$ and the third equality in \eqref{vPsi}, we have
 \[ \partial_v f=-\sum_{n=0}^{+\infty}\alpha\,\sqrt{n}\,\hat{C}_{n-1}\Psi_n,\]
 and then
 \[
   \sum_{n=0}^{N-1}\alpha^3(t)\,n\int_\TT |\hat{C}_{n-1}|^2\,\dD x
  \,\leq\, \sum_{n\in\NN} \alpha(t)\int_\TT
  |\alpha(t)\,\sqrt{n}\,\hat{C}_{n-1}|^2\,\dD x \,=\, \|\partial_v
  f\|_{L^2(\dD\mu_t)}^2.
\]
Gathering these results,  we obtain an estimate on $\mI_2$ as
\begin{equation*}
|\mI_2|\,\leq\, \frac{\mC}{\sqrt{\alpha}(t)}\,\|f(t)-f_{\delta}(t)\|_{L^2(\dD\mu_t)}\,\|\eta_{\delta}(t)\|_{L^2(\dD\mu_t)}.
\end{equation*}
Since the total error is the sum of the projection and consistency
errors, we get by \textcolor{black}{the} triangle inequality and the use of the first item of Theorem \ref{thm_errproj} that
$$
|\mI_2|\,\leq \, \frac{\mC}{\sqrt{\alpha}(t)}\left[
  \frac{1}{\left(\alpha^2\,N\right)^{m/2}}\,+\,h^{k+1} \,+\,
  \|\eta_{\delta}\|_{L^2(\dD\mu_t)}
\right]\,\|\eta_{\delta}\|_{L^2(\dwdx)}\,,
$$
where the constant $\mC>0$ depends on the weighted $H^m$ norm of
$f(t)$. Finally applying \textcolor{black}{the} Young inequality, we conclude that
\begin{equation}
  \label{estim:I2}
|\mI_2|\,\leq\,
\frac{\mC}{\sqrt{\alpha}(t)}\left(\frac{1}{\left(\alpha^2\,N\right)^{m}}\,+\,h^{2(k+1)}\,+\,
  \| \eta_{\delta}\|_{L^2(\dD\mu_t)}^2\right)\,.
\end{equation}
Now we turn to the last term $\mI_3$ and use the definition
\eqref{defBnh} of $\mB_{n}$ to obtain
\begin{equation*}
\mI_3\,=\,\mI_{31}\,+\,\mI_{32}\,+\,\mI_{33},
\end{equation*}
with
$$
\left\{\begin{array}{l}
\ds\mI_{31} \,:=\,
         -\sum_{n=0}^{N-1}\alpha(t)\int_\TT\left(\partial_t\xi_{\delta,n}\,-\,\frac{\alpha^\prime(t)}{\alpha(t)}\,\left(n\,\xi_{\delta,n}\,+\,\sqrt{(n-1)n}\,\xi_{\delta,n-2}\right)\right)\,\eta_{\delta,n}\dD
         x\,,
         \\[1.1em]
\ds\mI_{32}\,:=\,+\sum_{n=0}^{N-1}\alpha^2(t)\,\sqrt{n}\int_\TT E_{\delta}\,\xi_{\delta,n-1}\,\eta_{\delta,n}\,\dD x\,,
         \\[1.1em]
\ds\mI_{33}\,:=\,-\sum_{n=0}^{N-1}\alpha(t)
         \sum_{j\in\mJ}\mA_{n,j}(g_n(\xi_\delta),\eta_{\delta,n})\,.
       \end{array}\right.
     $$
     We start with $\mI_{31}$ and  remark that
$$
\sum_{n=0}^{N-1}\alpha(t)\int_\TT\left[\partial_t\xi_{\delta,n}-\frac{\alpha^\prime(t)}{\alpha(t)}\left(n\,\xi_{\delta,n}\,+\,\sqrt{(n-1)n}\,\xi_{\delta,n-2}\right)\right]^2\dD
x\,\leq\, \|\partial_t f(t)-\mP_{W_\delta}\partial_t f(t)
\|_{L^2(\dD\mu_t)}^2\,.
$$
Then, since $f(t)\in \HH^m(\dD\mu_t)$ satisfies the Vlasov equation
\eqref{vlasov}, and using the second estimate \eqref{lemme2_ineg1}
of Lemma \ref{lemme2}, we have
\begin{eqnarray*}
\|\partial_t f\|_{\HH^{m-1}(\dD\mu_t)}&\leq&
                                          \|v\,\partial_xf\|_{\HH^{m-1}(\dD\mu_t)}\,+\,\|E\|_{L^{\infty}}\,\|\partial_v f\|_{\HH^{m-1}(\dD\mu_t)}\,
  \\
&\leq & \frac{\mC}{\alpha^2(t)}\,\|f\|_{\HH^m(\dD\mu_t)}\,.
\end{eqnarray*}
Thus, applying the Cauchy-Schwarz inequality to $\mI_{31}$ and using
Theorem \ref{thm_errproj} to $\partial_t f$ with $r=m-1$, we have
\begin{equation}
  \label{estim:I31}
|\mI_{31}| \,\leq\, \frac{\mC}{\alpha^2(t)}\left(\frac{1}{(\alpha^2(t) N)^{(m-1)/2}} \,+\,h^{k+1}\right)\,\|\eta_{\delta}\|_{L^2(\dD\mu_t)}.
\end{equation}
The estimate on $\mI_{32}$ follows the same lines as the one for
$\mI_{31}$. Indeed, remarking again that 
\[
  \sum_{n=0}^{N-1}\alpha(t)\int_\TT\left|\alpha(t)\,\sqrt{n}\,\xi_{\delta,n-1}\right|^2\dD
  x \,\leq\, \|\partial_v f-\mP_{W_\delta}\partial_v
  f\|_{L^2(\dD\mu_t)}^2\,,
\]
and applying the Cauchy-Schwarz inequality to $\mI_{32}$ and Theorem
\ref{thm_errproj} to $\partial_v f$ with $r=m-1$, we get
\begin{equation*}
|\mI_{32}|\,\leq\,
\mC\,\|E_{\delta}\|_{L^\infty}\,\left(\frac{1}{(\alpha^2(t)\,N)^{(m-1)/2}}
  \,+\,h^{k+1}\right)\, \left(\alpha(t)\sum_{n=0}^{N-1}\int_\TT|\,\eta_{\delta,n}\,|^2\dD x\right)^{{1}/{2}}\,,
\end{equation*}
which can be written as 
\begin{equation}
  \label{estim:I32}
|\mI_{32}|\leq \frac{\mC}{\sqrt{\alpha}(t)}\left(\frac{1}{(\alpha^2(t)\,N)^{(m-1)/2}}\,+\,h^{k+1}\right)\,\|\eta_{\delta}(t)\|_{L^2(\dD\mu_t)}\,.
\end{equation}

Finally, we turn to the estimation of $\mI_{33}$ and split it as $\mI_{33}\,=\,\mI_{331}\,+\,\mI_{332}$, with
$$
\left\{\begin{array}{l}
\ds \mI_{331}\,:=\,
         \sum_{n=0}^{N-1}\alpha(t) \,\sum_{j\in\mJ}\int_{I_j}g_n(\xi_\delta)\,\partial_x\eta_{\delta,n}\,\dD
         x,\\[1.1em]
\ds         \mI_{332}\,:=\, -\sum_{n=0}^{N-1}\alpha(t)\sum_{j\in\mJ}\left((\hat{g}_n(\xi_\delta)\,\eta_{\delta,n}^-)_{j+{1}/{2}}-(\hat{g}_n(\xi_\delta)\,\eta^+_{\delta,n})_{j-{1}/{2}}\right)\,.
       \end{array}\right.
 $$
Using the definition \eqref{anh} of $g_n(\xi_\delta)$, we have
\begin{equation*}
\mI_{331}\,=\,\sum_{n=0}^{N-1}\sum_{j\in\mJ}\int_{I_j}\left(\,\sqrt{n}\,\xi_{\delta,n-1}\,+\, \sqrt{n+1}\,\xi_{\delta,n+1}\,\right)\partial_x\eta_{\delta,n}\dD x\,.
\end{equation*}
Since $\xi_{\delta,n}=\hat{C}_n-\mP_{X_h}\hat{C}_n$ for $n=0,\ldots,N-1$ and
$\partial_x\eta_{\delta,n}\in X_h$, we have by definition of the
projection $\mP_{X_h}$ that $\mI_{331}=0$. Now, it remains to
estimate  $\mI_{332}$. Using \textcolor{black}{the} periodic boundary conditions, we may write
it as
\begin{equation*}
\mI_{332}\,=\,\sum_{n=0}^{N-1}\alpha(t)\sum_{j\in\hat\mJ}\left(\,\hat{g}_n(\xi_\delta)\,[\eta_{\delta,n}]\right)_{j-{1}/{2}}\,.
\end{equation*}
Then, applying the Young inequality, we obtain
\begin{equation*}
|\mI_{332}|\,\leq\,
\frac{\alpha(t)}{2}\,\sum_{n=0}^{N-1}\sum_{j\in\hat\mJ}\left( \, \frac{\alpha(t)}{\nu_n}\left|\,\hat{g}_n(\xi_\delta)_{j-{1}/{2}}\,\right|^2\,+\,\frac{\nu_n}{\alpha(t)}\,[\eta_{\delta,n}]_{j-{1}/{2}}^2\,\right)\,.
\end{equation*}

The second term of the right hand side will be balanced with the
dissipation term figuring in the estimate \eqref{estim:I1} of $\mI_1$,
whereas the first term is estimated as follows. Using the definition
of the numerical flux $\hat{g}_n$  \eqref{lfflx}  and the artificial viscosity $\nu_n$, we have 
\begin{eqnarray*}
 \frac{\alpha^2(t)}{2}\,\sum_{n=0}^{N-1}\sum_{j\in\hat\mJ}\frac{1}{\nu_n}\,\left|\,\hat{g}(\xi_\delta)_{n,j-{1}/{2}}\,\right|^2
  &\leq&
         \sum_{n=0}^{N-1}\sum_{j\in\hat\mJ}\frac{1}{\underline{\nu}} \,\left\{\sqrt{n}\,\xi_{\delta,n-1} \,+\,\sqrt{n+1}\,\xi_{\delta,n+1}\right\}_{j-{1}/{2}}^2
  \\
  &+&\overline{\nu}\,\sum_{n=0}^{N-1}\sum_{j\in\hat\mJ}[\xi_{\delta,n}]_{j-{1}/{2}}^2\,.
\end{eqnarray*}
On the one hand, observing that
\[
  v\,f-\mP_{W_\delta}(v\,f)=\sum_{n\in\NN}\frac{1}{\alpha}\left(\sqrt{n}\,\xi_{\delta,n-1}+\sqrt{n+1}\,\xi_{\delta,n+1}\right)\Psi_n,
\]
we apply the second item of Theorem \ref{thm_errproj} to $g=vf$ with $r=m-1$ and the second
inequality \eqref{lemme2_ineg1} of Lemma \ref{lemme2}, 
\begin{eqnarray*}
\sum_{n=0}^{N-1}\sum_{j\in\hat\mJ}\frac{1}{\underline{\nu}}
\,\left\{\sqrt{n}\,\xi_{\delta,n-1}
  \,+\,\sqrt{n+1}\,\xi_{\delta,n+1}\right\}_{j-{1}/{2}}^2
  &\leq&  \mC\,\alpha^2\,\int_{\RR}\|v\,f-\mP_{W_\delta}(v\,f)\|_{L^2(\Gamma)}^2\,\dw
\\
  &\leq&  \mC \left(
  \frac{1}{(\alpha^2(t) N)^{m-1}}+
         h^{2k+1}\right)\,\|f(t)\|_{\HH^{m}(\dD\mu_t)}^2\,.
\end{eqnarray*}
The viscosity term is estimated in the same manner, hence  we deduce that
\begin{equation}
  \label{estim:I33}
|\mI_{33}|\leq
\mC\, \left(
  \frac{1}{\left(\alpha^2(t)\,N\,\right)^{m-1}}\,+\,h^{2k+1}\right)\,\|f(t)\|_{\HH^m(\dD\mu_t)}^2
\,+\,\frac{1}{2}\sum_{n=0}^{N-1}\sum_{j\in\hat\mJ}\nu_n \,[\eta_{n,h}]_{j-{1}/{2}}^2.
\end{equation}

Gathering \eqref{estim:I31}, \eqref{estim:I32} and \eqref{estim:I33}, we obtain
\begin{eqnarray*}
|\mI_3|&\leq
  &\,\frac{\mC}{\alpha^2(t)}\left(\frac{1}{(\alpha^2(t)\,N)^{(m-1)/2}}\,+\,h^{k+1}\right)\,\|\eta_{\delta}(t)\|_{L^2(\dD\mu_t)}
  \\
& +&
     \mC\,\left(\frac{1}{(\alpha^2(t)\,N)^{m-1}}\,+\, h^{2k+1}\right)\,+\,
     \frac{1}{2} \sum_{n=0}^{N-1}\sum_{j\in\hat\mJ}\nu_n\,[\eta_{\delta,n}(t)]_{j-{1}/{2}}^2. 
\end{eqnarray*}
Finally applying \textcolor{black}{the} Young inequality to the first term, we conclude that
\begin{equation}
  \label{estim:I3}
|\mI_3|\leq \frac{\mC}{\alpha^4(t)}\left( \frac{1}{(\alpha^2(t)\,N)^{m-1}}\,+\,h^{2k+1}\right)\,+\,\|\eta_{\delta}(t)\|_{L^2(\dD\mu_t)}^2 \,+\,\frac{1}{2}\sum_{n=0}^{N-1}\sum_{j\in\hat\mJ}\nu_n\,[\eta_{n,h}(t)]_{j-{1}/{2}}^2. 
\end{equation}
Collecting \eqref{estim:I1}, \eqref{estim:I2} and \eqref{estim:I3} in \eqref{deb_consis}, it yields
  \begin{eqnarray*}
    \frac{1}{2}\frac{\dD }{\dD
    t}\|\mP_{W_\delta}f(t)-f_{\delta}(t)\|_{L^2(\dD\mu_t)}^2 &\leq&  \frac{\mC}{\sqrt{\alpha(t)}}\,\|\mP_{W_\delta}f(t)-f_{\delta}(t)\|_{L^2(\dD\mu_t)}^2\\
 &+&\frac{\mC}{\alpha^4(t)}\left(\frac{1}{(\alpha^2(t)\,N)^{m-1}}\,+\,h^{2k+1}\right).
\end{eqnarray*}

We conclude using the Gronwall lemma that for any $t\geq 0$, we have
\begin{equation*}
\|\mP_{W_\delta}f(t)-f_{\delta}(t)\|_{L^2(\dD\mu_t)}^2\,\leq \,
 \int_0^t\frac{\mC}{\alpha^4(s)}\,\left(\frac{1}{(\alpha^2(s)\,N)^{m-1}}+h^{2k+1}\right)\,\dD
  s\,\, \exp\left(\int_0^t\frac{\mC\dD\tau}{\sqrt{\alpha}(\tau)}\right)\,.
\end{equation*}
Using the projection estimate of Theorem \ref{thm_errproj} together with
the latter inequality, and since \textcolor{black}{by Proposition \ref{prop:bornes_alpha}} the function $\alpha$ is bounded from
below and above on a finite interval of time, we have established \eqref{res:main}.

\section{Numerical simulations}
\setcounter{equation}{0}
\label{sec:5}

It is worth to mention that the Hermite/di\textcolor{black}{s}continuous Galerkin method
has already been validated in  \cite{Filbet2020,BCF2021} on classical
numerical tests. \textcolor{black}{Hence in this section, we perform complementory numerical simulations on the
Vlasov-Poisson system \eqref{vlasov} using \textcolor{black}{the} DG/Hermite Spectral
method to illustrate our theoretical result and to investigate the
\textcolor{black}{impact of the choice} of the  free parameter
$\gamma\geq 0$ which enters in the definition of the scaling function 
$\alpha$ in \eqref{eq:def_alpha}. We also refer to \cite{Tang1993}
for a discussion on the latter point.}

\subsection{Test 1: order of convergence}
We take $N$ modes for \textcolor{black}{the} Hermite spectral bases and $N_x$ cells
in space, and apply a third order Runge-Kutta scheme for the time
discretization with a small time step  $\Delta t=0.001$ in order to neglect the time
discretization error.  The initial scaling parameter $\alpha(0)$ is chosen to be $1$ and the Hou-Li filter with $2/3$ dealiasing rule \cite{hou2007computing,filtered} will be used.

\textcolor{black}{We choose the following initial condition
\beq
\label{landau}
f_0(x,v)\,=\,\left(1+\delta \cos\left(k \,x\right)\right)\f{1}{\sqrt{2\pi}}\exp\left(-\f{v^2}{2}\right)\,,
\eeq
with $\delta=0.01$, which corresponds to the Landau damping
configuration.  The background density is $\rho_0=1$, the length of
the domain in the $x$-direction is $L=4\pi$ (that is $k=\pi/6$) and the final time
is $T=0.1$. The free parameter $\gamma$ is chosen as $\gamma=1$ and
the errors are computed by comparing to a reference
solution obtained using $N_x\times N =512\times 512$ with $P_2$ piecewise polynomial
basis and Hermite polynomial in velocity. In Table \ref{tab41}, we
show the weighted $L^2$ errors and orders for $P_k$ piecewise polynomials with $k=1,2$ respectively. Due to the fact that \textcolor{black}{the} time steps are smaller than the spatial mesh size, we can observe $(k+1)$-th order of convergence for $P_k$ polynomials respectively.}

\textcolor{black}{\begin{table}[!ht] 
	\centering
	\label{tab41}
	\vspace{0.15in}
	\begin{tabular}{|l|l|l|l|l|} \hline
	\multicolumn{1}{|l|}{}&\multicolumn{2}{|c|}{$P_1$}&\multicolumn{2}{|c|}{$P_2$} \cr\hline 		
	$N_x\times N$ & $L^2(\dD\mu_t)$ error & Order & $L^2(\dD\mu_t)$ error & Order   \\ \hline
	$16\times 16$ & 5.12E-4 & --   & 1.44E-5 & --      \\ \hline
        $32\times 32$ & 1.05E-4 & 2.28   & 1.68E-6 & 3.09      \\ \hline
	$64\times 64$ & 2.31E-5 & 2.18 & 2.05E-7 & 3.04  \\ \hline
	$128\times 128$& 5.42E-6 & 2.09 & 2.48E-8 & 3.04  \\ \hline
	\end{tabular}
	\caption{Test 1 : Numerical weighted $L^2$ errors and orders for Landau damping with initial distribution \eqref{landau}, $\delta=0.01$ and $k=\pi/6$, $T=0.5$.}
      \end{table}}

\subsection{Test 2: Bump-on-the-tail}
\textcolor{black}{Now we investigate the impact  of the parameter $\gamma$ appearing in the
definition of  scaling function $\alpha$ in
\eqref{eq:def_alpha}. According to our analysis, when
$\gamma$ decreases, the function $\alpha$ decays more slowly and the
weighted  norm $\|f_\delta(t)\|_{L^2(\dD\mu_t)}$  is bounded as
\beq
 \|f_\delta(t)\|_{L^2(\dD\mu_t)} \,\leq\, \|f_{\delta}(0)\|_{L^2(\dD\mu_0)}\,e^{t/4\gamma}\,.
\label{riri}
 \eeq
For practical computation, we expect that the scaling function
$\alpha$ follows the variation of the distribution function in the
velocity space, hence it is crucial to have a good understanding of
the impact of this free parameter. For these reasons we
present a numerical example on the bump-on-the-tail \cite[Section 5.2]{BCF2021} where the
distribution function strongly varies in $v$. We consider the initial distribution as
\begin{align}
\label{bot}
f(0,x,v)=f_{b}(v)(1+\kappa\cos(k\,n\,x))\,,
\end{align} 
where the bump-on-tail distribution is
\begin{align}
f_{b}(v)=\frac{n_p}{\sqrt{\pi}v_{p}}e^{-v^2/v^2_{p}}+\frac{n_b}{\sqrt{\pi}v_{b}}e^{-(v-v_{d})^2/v_{b}^2}\,.
\end{align}
We choose a strong perturbation with $\kappa=0.04$, $n=3$ and
$k=2\pi/L$ with $L=62$ and the other parameters are set to be
$n_p=0.9$, $n_b=0.1$, $v_{d}=4.5$, $v_{p}=\sqrt{2}$,
$v_{b}=\sqrt{2}/2$. The computational domain is $[0,L]\times[-8, 8]$.
These settings have been used in \cite{shoucri1974} and \cite[Section
4.3]{Filbet2020}. For this case, we take the initial scaling function
to be   $\alpha_0 =5/7$. We take $N_x\times N=64\times 128$.
\\
For the Vlasov-Poisson system, we know that the total energy and the $L^2$
norm of $f$ are exactly conserved, but this property is no more true
at the discrete level.  In Figure \ref{fig:1} $(a)$-$(b)$, we present the time
evolution of these quantities for different values of $\gamma$
corresponding to the  definition \eqref{eq:def_alpha}. On the one
hand, the amplitude of the variations of the total energy are of order
$10^{-6}$ for the different values of $\gamma$ and are even smaller
when $\gamma$ is small. On the other hand, the $L^2$ norm of $f$
oscillates around its initial value, but again the impact of the
parameter $\gamma$ is negligible (see $(b)$). We also present
the time evolution of the potential and kinetic energy in  Figure
\ref{fig:1} $(c)$-$(d)$ for different values of $\gamma$.  From these
plots, we can observe that the impact of this free parameter is
limited and does not affect the accuracy of the method.}

\textcolor{black}{\begin{figure}
	\centering
        \begin{tabular}{cc}
          \includegraphics[width=3.2in,clip]{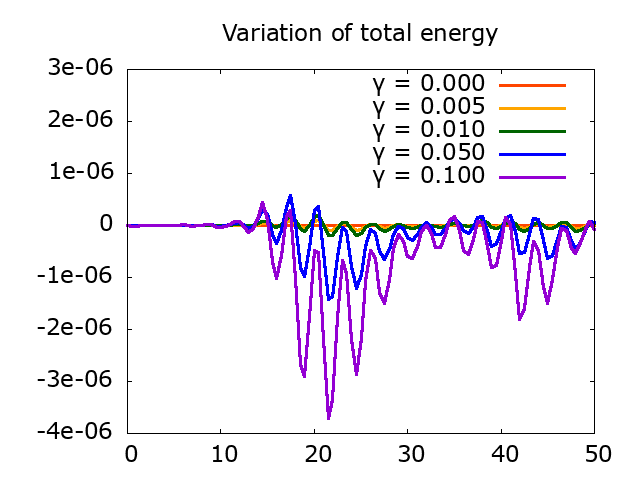}&
           \includegraphics[width=3.2in,clip]{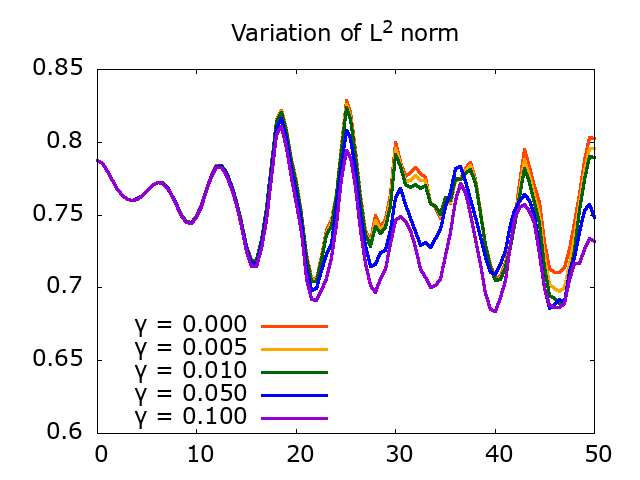} \\
                                                         (a)&(b) \\
         \includegraphics[width=3.2in,clip]{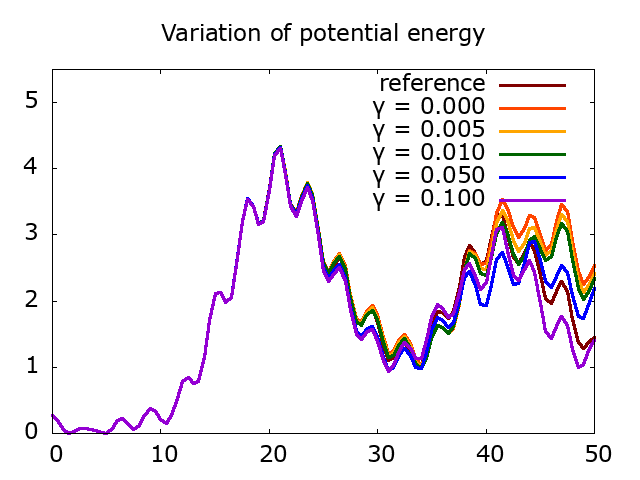}&
	\includegraphics[width=3.2in,clip]{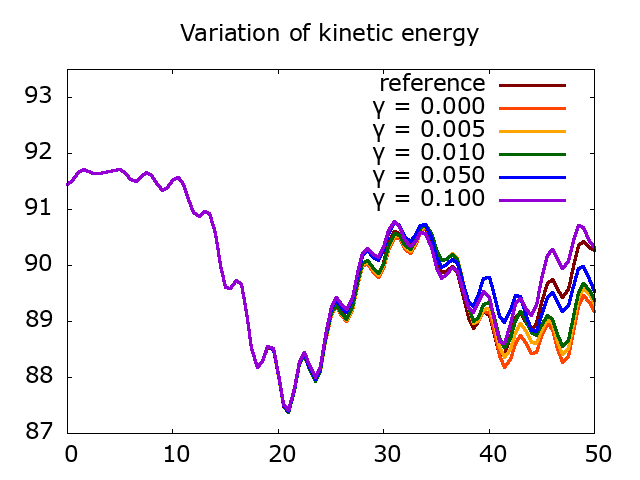}\\
        (c)&(d)
        \end{tabular}
	\caption{{\bf Test 2:}   time evolution of (a) the variation of the total energy,
          (b) the  standard $L^2$ norm of $f$, (c) the potential energy and (d) the kinetic energy  for $\alpha$ given by 
          \eqref{eq:def_alpha}, with several values of $\gamma=0,...\,, \, 10^{-1}$. }
	\label{fig:1}
\end{figure}
}

\textcolor{black}{Finally, in Figure \ref{fig:2}, we present the time
evolution of $\alpha$ and the corresponding weighted $L^2$ norm for
different values of $\gamma$. Since the initial data $\alpha(0)$ is
the same for our simulations, we know that when $\gamma\leq \gamma'$,
we have $\alpha_{\gamma'}(t) \leq \alpha_{\gamma}(t)$ (since
$\alpha_{\gamma'}$  decays 
faster than $\alpha_{\gamma}$) and
then
$$
\|f(t) \|_{L^2(\dD\mu_t')} \leq \|f(t) \|_{L^2(\dD\mu_t)}, 
$$
where $\mu_t'$ represents the measure associated to
$\alpha_{\gamma'}(t)$.  We also notice that when $\gamma$ increases,
the weighted $L^2$ norm $\|f(t) \|_{L^2(\dD\mu_t)},$ grows slowly,
which is consistent with our estimate  \eqref{riri}.  Moreover, for a fixed $\gamma$,  the time evolution of $\|f(t)
\|_{L^2(\dD\mu_t)}$ first follows the growth of the potential energy which
is almost exponentially fast, but when nonlinear effects dominate, it
starts to oscillate and is stabilized. This last numerical result
illustrates that the estimate \eqref{riri} is certainly not optimal
for large time...}

\textcolor{black}{\begin{figure}
	\centering
        \begin{tabular}{cc}
          \includegraphics[width=3.2in,clip]{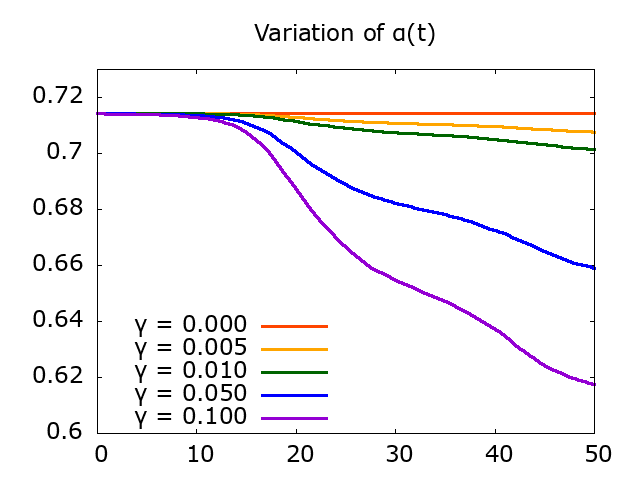}&
          \includegraphics[width=3.2in,clip]{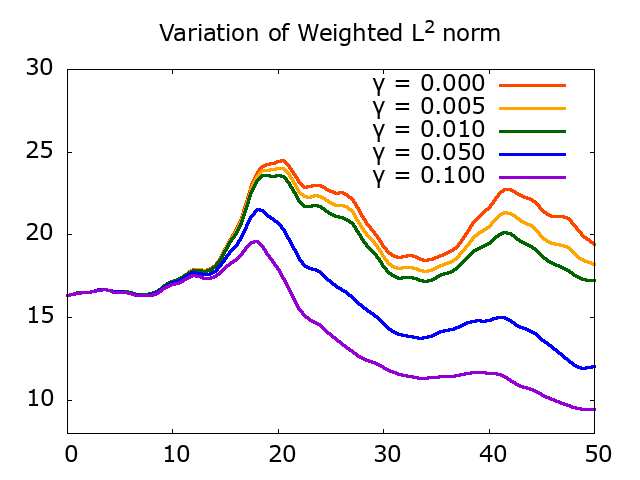} \\
        (a)&(b)
        \end{tabular}
	\caption{{\bf Test 2:}   time evolution of (a) the scaling
          function $\alpha$  and (b) the corresponding  weighted $L^2$ norm
          of $f$, for $\alpha$ given by \eqref{eq:def_alpha}, with
          several values of $\gamma=0,...\,, \, 10^{-1}$.}
	\label{fig:2}
\end{figure}
}

\section{Conclusion \& Perspectives}
\setcounter{equation}{0}
\label{sec:6}

In this article we investigate the convergence
analysis of  a spectral Hermite discretization of the Vlasov-Poisson
system with a time-dependent scaling factor allowing to prove \textcolor{black}{the}
stability and convergence of the numerical solution in an appropriate
functionnal framework. The control of this scaling factor, and more
precisely a positive lower bound, is crucial to ensure completely the
stability of the method and our proof follows carefully this
dependency. Our analysis is limited to the one dimensional case for
which the control of the electric field is straightforward. In a
future work we would like to adapt our approach to the
multi-dimensional case following the ideas in \cite{Ayuso2012} for the
control of the $L^\infty$ norm of the electric field in two and three dimensions.

%
\section*{Acknowledgement}
Both authors want to thank the two referees for their motivating
comments to improve our analysis.

Marianne Bessemoulin-Chatard is partially funded by the Centre Henri
Lebesgue (ANR-11-LABX-0020-01) and ANR Project MoHyCon
(ANR-17-CE40-0027-01). Francis Filbet  is partially funded by the ANR
Project Muffin (ANR-19-CE46-0004) and by the EUROfusion Consortium and
has received funding from the Euratom research and training programme 2019-2020 under grant agreement No 633053. The views and opinions expressed herein do not necessarily reflect those of the European Commission.

\bibliographystyle{abbrv}
\bibliography{refer_new}
\end{document}